\theoremstyle{plain}
\newtheorem{thm}{Theorem}[section]
\newtheorem{prop}[thm]{Proposition}
\newtheorem{lem}[thm]{Lemma}
\newtheorem{cor}[thm]{Corollary}
\theoremstyle{definition}
\newtheorem{example}[thm]{Example}
\newtheorem{remark}[thm]{Remark}
\newcommand{\sH}{\mathcal{ H}}
\newcommand{\sM}{\mathcal{ M}}
\newcommand{\sV}{\mathcal{ V}}
\newcommand{\sW}{\mathcal{ W}}
\newcommand{\sN}{\mathcal{ N}}
\def\d{\delta}
\def\la{\Lambda}
\def\om{\Omega}
\def\tht{\Theta}
\def\ts{\times}
\def\iy{\infty}
\def\im{{\rm Im\, }}
\def\kr{{\rm Ker\, }}
\def\rank{{\rm rank\, }}
\def\lg{\langle}
\def\rg{\rangle}
\def\BC{{\mathbb C}}
\def\BD{{\mathbb D}}
\def\BT{{\mathbb T}}
\newcommand{\mat}[2]{\ensuremath{\left[\begin{array}{#1}#2\end{array} \right]}}
\newcommand{\wtil}[1]{\widetilde{#1}}
\newcommand{\sbm}[1]{\left[\begin{smallmatrix} #1\end{smallmatrix}\right]}
\newcommand{\degM}{\delta}
\newcommand{\half}{\frac{1}{2}}
\newcommand{\fR}{\mathfrak{R}}
\newcommand{\fF}{\mathfrak{F}}
\newcommand{\ands}{\quad\mbox{and}\quad}
\begin{document}

\title[Rational matrix solutions to the Leech equation]{Rational matrix solutions to the Leech equation: The Ball-Trent approach revisited}

\author{Sanne ter Horst}
\address{School of Computer, Statistical and Mathematical Sciences\\
North-West University\\
Potchefstroom 2520\\ South Africa}
\email{sanne.terhorst@nwu.ac.za}

\keywords{Leech equation, Toeplitz operators, stable rational matrix functions, outer spectral factor}

\begin{abstract}
Using spectral factorization techniques, a method is given by which rational matrix solutions to the Leech equation with rational matrix data can be computed explicitly.
This method is based on an approach by J.A. Ball and T.T. Trent, and generalizes techniques from recent work of T.T. Trent for the case of polynomial matrix data.
\end{abstract}

\maketitle

\setcounter{section}{-1}
\setcounter{equation}{0}
\section{Introduction}\label{intro}

Consider $H^\iy$-matrix functions $G\in H_{m\ts p}^\iy $ and $K\in H_{m\ts q}^\iy$, and let $T_G:\ell^2_+(\BC^p)\to\ell^2_+(\BC^m)$ and $T_K:\ell^2_+(\BC^q)\to\ell^2_+(\BC^m)$ be the corresponding (block) Toeplitz operators. See Section \ref{S:prelim} below for the definitions of these spaces and operators. A beautiful unpublished result of R.B. Leech (cf., \cite{RR85}) tells us that there exists an $X\in H_{p\ts q}^\iy $ such that
\begin{equation}\label{Leech1}
G(z)X(z)=K(z)\quad (z\in\BD), \ands \|X\|_\iy\leq 1,
\end{equation}
with $\BD$ the open unit disc in $\BC$, if and only if
\begin{equation}\label{poscond1}
T_GT_G^*-T_KT_K^* \ \mbox{ is positive}.
\end{equation}
Note that \eqref{Leech1} is equivalent to $T_GT_X=T_K$ and $\|T_X\|\leq 1$. Hence Leech's theorem can be viewed as the analogue of the Douglas factorization lemma \cite{D66} within the class of analytic Toeplitz operators. The necessity of \eqref{poscond1} follows directly from Douglas' factorization lemma and the reformulation of \eqref{Leech1} in terms of Toeplitz operators. The other implication is more involved. The solution criterion \eqref{poscond1} can also be formulated directly in terms of the functions $G$ and $K$, it is equivalent to the map
\begin{equation}\label{poskern}
L(z,w)=\frac{G(z)G(w)^*-K(z)K(w)^*}{1-z\overline{w}}\quad(z,w\in\BD)
\end{equation}
being a positive kernel in the sense of Aronszajn \cite{A50}, that is, for any finite sequence $z_1,\ldots,z_n\in\BD$ the block operator matrix $[L(z_i,z_j)]_{i,j=1,\ldots,n}$ defines a positive operator on the Hilbert space direct sum of $n$ copies of $\BC^m$. We note that the actual result by Leech is stated in the general context of Hilbert space operators intertwining shift operators, and in particular holds for operator-valued $H^\iy$-functions as well. Our interest is primarily in the case where $G$ and $K$ are rational matrix functions. %Hence we refrain ourselves from such generality.

There exists various proofs of  Leech's theorem, see \cite{Nikol86} and the references therein. In \cite{BT98} Ball and Trent prove a generalization of Leech's theorem to the polydisc in $\BC^d$, adapting a technique coined the `lurking isometry' approach in \cite{B00}, and give a description of all $X\in H_{p\ts q}^\iy $ satisfying \eqref{Leech1}.
We briefly outline the construction here, specified to the single variable case.

The positivity of $T_GT_G^*-T_KT_K^*$ implies we can factor $T_GT_G^*-T_KT_K^*$ as $T_GT_G^*-T_KT_K^*=\la_\circ \la_\circ^*$, for some operator $\la_\circ:\sH_\circ\to\ell^2_+(\BC^m)$ such that $\kr\la_\circ=\{0\}$. The latter implies that $\dim\sH_\circ=\rank (T_GT_G^*-T_KT_K^*)$. Such a factorization is often referred to as a Kolmogorov decomposition in the literature, cf., \cite{C96}. Let $\hat{\la}_\circ$ be the analytic operator-valued function on $\BD$, with values $\hat{\la}_\circ(z):\sH_\circ\to\BC^m$, $z\in\BD$, defined by $\hat{\la}_\circ(z)h=(\fF_m\la_\circ h)(z)$, $z\in\BD,\, h\in\sH_\circ$. Here $\fF_m$ is the Fourier transform mapping $\ell^2_+(\BC^m)$ isometrically onto the Hardy space $H^2_m$. Next one verifies that $G$, $K$ and $\hat{\la}_\circ$ satisfy the following identity:
\begin{align}
 &z\overline{w}\hat{\la}_\circ(z)\hat{\la}_\circ(w)^*+G(z)G(w)^*=\nonumber  \\
&\hspace{3cm}  =\hat{\la}_\circ(z)\hat{\la}_\circ(w)^*+K(z)K(w)^*\quad (z,w\in\BD).\label{fundid1}
\end{align}
From this identity one derives the existence of a partial isometry
\begin{equation}\label{defMintro}
M_\circ=\begin{bmatrix}A_\circ & B_\circ\\ C_\circ & D_\circ
\end{bmatrix}:\begin{bmatrix}\sH_\circ\\ \BC^q\end{bmatrix}
\to\begin{bmatrix}\sH_\circ\\ \BC^p\end{bmatrix}
\end{equation}
such that
\begin{equation}\label{contracon2intro}
\mat{cc}{z \hat{\la}_\circ(z) & G(z)}M_\circ
=\mat{cc}{\hat{\la}_\circ(z) & K(z)}\quad (z\in\BD).
\end{equation}
This in turn implies that the function $X$ defined on $\BD$ by
\begin{equation}\label{sol0}
X(z)=D_\circ+zC_\circ(I-zA_\circ)^{-1}B_\circ\quad (z\in\BD)
\end{equation}
is in $H^\iy_{p\ts q}$ and satisfies \eqref{Leech1}. If one considers all contractions $M_\circ$ of the form \eqref{defMintro} such that \eqref{contracon2intro} holds, possibly enlarging $\sH_\circ$, all solutions $X$ to \eqref{Leech1} are obtained via \eqref{sol0}.

From the point of view of rational matrix functions the above construction has one disadvantage.  In general,  the Hilbert space $\sH_\circ$ appearing in \eqref{defMintro}  is infinite dimensional, and in that case it is hard to see when the solution $X$ in \eqref{sol0} is rational. In fact, even if  both $G$ and $K$ are rational matrix functions, $T_GT_G^*-T_KT_K^*$ may very well be of infinite rank. More precisely,   see Theorem \ref{T:rational} below,  in the rational matrix case  $T_GT_G^*-T_KT_K^*$ has finite rank if and only if $G(e^{it})G(e^{it})^*=K(e^{it})K(e^{it})^*$ for all $0\leq t \leq 2\pi$. Overcoming this difficulty is the main theme of the present paper.

In the context of the Toeplitz-corona problem, which can be reduced to the special case of \eqref{Leech1} with $q=m$ and $K(z)=I_m$, $z\in\BD$, Trent \cite{Trent07} deduced a modification  of the above procedure for the special case that $G$ is a row vector ($m=1$) polynomial, leading to a rational column vector solution of McMillan degree at most the highest degree of the polynomials occurring in $G$. Throughout this paper, the McMillan degree of a rational matrix function $V$ will be denoted by $\d(V)$; see Section \ref{S:prelim} for the precise definition of $\d(V)$. The procedure of \cite{Trent07} was recently extended in \cite{Trent13} to the general case of the Leech equation \eqref{Leech1}, with $G$ and $K$ rational matrix functions, by reducing it to the case where $G$ and $K$ have polynomial entries, and solving the latter problem via techniques similar to those in \cite{Trent07}.

In the present paper we also consider the Leech equation \eqref{Leech1} with $G$ and $K$ rational matrix functions. However, instead of reducing to the case of polynomial data, we associate our problem with another Leech equation, with data functions $G$ and $\wtil{K}$, i.e., with the same $G$. The advantage of our approach is that we keep better track of the McMillan degrees in our computations, leading to sharper bounds on the McMillan degrees of the solutions. The construction of $\wtil{K}$ even works in the case where $G$ and $K$ are not rational, provided that the function $R\in L^\iy_{m\ts m}$ defined by
\begin{equation}\label{defR}
R(e^{it})=G(e^{it})G(e^{it})^*-K(e^{it})K(e^{it})^*\quad (a.e.\ t\in[0,2\pi])
\end{equation}
admits an outer spectral factor, that is, a function $\Phi\in H^\iy_{r\ts m}$, for some $r\leq m$, with $T_R=T_\Phi^* T_\Phi$ and $\ker T_\Phi^*=\{0\}$. Note that outer spectral factors are unique up to multiplication with a unitary constant matrix on the left, hence, with some abuse of terminology, we will refer to the outer spectral factor, provided it exist. If $G$ and $K$ are rational, then so is $R$, and this implies an outer spectral factor of $R$ exists.

Our method requires the following procedure:
\begin{itemize}
\item[1.]
Define $R\in L^\iy_{m\ts m}$ by \eqref{defR}. Then $T_R$ is positive. Assume $R$ admits an outer spectral factor $\Phi\in H^\iy_{r\ts m}$, for some $r\leq m$.

\item[2.]
The subspace
\begin{equation}\label{sMGK}
\sM_{\Phi}:=\{f\in \ell_+^2(\BC^r)\mid T_\Phi^*f\in  \overline{\im H_G + \im H_K}\,\}.
\end{equation}
is invariant under the backward shift on $\ell^2_+(\BC^r)$, and hence, by the Beurling-Lax theorem, there exists an inner function $\tht\in H^\iy_{r\ts k}$, for some $k\leq r$, such that the range of $T_\tht$ is the orthogonal complement of $\sM_{\Phi}$.

\item[3.]
Define $F\in L^\iy_{m\ts k}$ by $F(e^{it})=\Phi(e^{it})^*\tht(e^{it})$, for a.e.\ $t\in[0,2\pi]$. Then $F\in H^\iy_{m\ts k}$.
\end{itemize}
The claims in the above steps will be proved Section \ref{S:NewApproach}. The function $F$ defined in Step 3 can be taken as a particular choice for the function $F$ appearing in the next theorem. This theorem provides the basis for our method and is the main result of the present paper; a proof will be given in Section \ref{S:NewApproach}.

\begin{thm}\label{T:main}
Assume $G\in H^\iy_{m\ts p}$ and $K\in H^\iy_{m\ts q}$ such that $T_GT_G^*-T_KT_K^*$ is positive and the function $R$ defined in \eqref{defR} admits an outer spectral factor. Then there exists a function $F\in H^\iy_{m\ts k}$, for some $k\leq m$, such that:
\begin{itemize}
\item[\textup{(i)}] $T_GT_G^*-T_KT_K^*-T_FT_F^*$ is positive;

\item[\textup{(ii)}] $\rank \big(T_GT_G^*-T_KT_K^*-T_FT_F^*\big )\leq \dim \Big(\overline{\im H_G + \im H_K}\Big)$.
\end{itemize}
Here $H_G$ and $H_K$ denote the Hankel operators of $G$ and $K$, respectively.
\end{thm}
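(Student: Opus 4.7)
The plan is to work through the three construction steps, compute $T_FT_F^*$, obtain the rank bound via a vanishing argument on a distinguished subspace, and handle positivity as the final (and technically hardest) step.

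\smallskip

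Set $N:=\overline{\im H_G+\im H_K}\subset H^2_m$, so that $N^\perp=\kr H_G^*\cap\kr H_K^*$. Images of Hankel operators are backward-shift invariant, so $N$ is backward-shift invariant; since $T_\Phi^*$ intertwines the backward shifts, $\sM_\Phi=(T_\Phi^*)^{-1}(N)$ is backward-shift invariant in $H^2_r$, and Beurling--Lax yields $\sM_\Phi^\perp=\ran T_\tht$ for an inner $\tht\in H^\iy_{r\ts k}$ with $k\le r$. A short duality argument gives $\sM_\Phi^\perp=\overline{T_\Phi(N^\perp)}$. To prove $F=\Phi^*\tht\in H^\iy_{m\ts k}$ it suffices that $\Phi^*y\in H^2_m$ for every $y\in\sM_\Phi^\perp$; by density one reduces to $y=T_\Phi v$ with $v\in N^\perp$, in which case $\Phi^*T_\Phi v=\Phi^*\Phi v=Rv$, and the negative Fourier parts of $GG^*v$ and $KK^*v$ vanish precisely when $H_G^*v=0=H_K^*v$.

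\smallskip

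Since $\tht$ is analytic the Widom product formula has no Hankel correction and gives $T_F=T_\Phi^*T_\tht$; inner-ness of $\tht$ yields $T_\tht T_\tht^*=P_{\sM_\Phi^\perp}$, hence $T_FT_F^*=T_\Phi^*P_{\sM_\Phi^\perp}T_\Phi=T_R-T_\Phi^*P_{\sM_\Phi}T_\Phi$. The standard identities $T_{GG^*}=T_GT_G^*+H_GH_G^*$ and $T_{KK^*}=T_KT_K^*+H_KH_K^*$ give $T_R=(T_GT_G^*-T_KT_K^*)+(H_GH_G^*-H_KH_K^*)$, so
\begin{equation*}
B:=T_GT_G^*-T_KT_K^*-T_FT_F^*=T_\Phi^*P_{\sM_\Phi}T_\Phi+H_KH_K^*-H_GH_G^*.
\end{equation*}
For (ii): if $v\in N^\perp$ then $H_G^*v=0=H_K^*v$ and $T_\Phi v\in\sM_\Phi^\perp$, so $P_{\sM_\Phi}T_\Phi v=0$ and thus $Bv=0$. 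Self-adjointness of $B$ then forces $\ran B\subset(N^\perp)^\perp=N$, and $\rank B\le\dim N$ follows.

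\smallskip

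Positivity (i) is the main obstacle. It is equivalent to $[T_K,T_F][T_K,T_F]^*\le T_GT_G^*$, that is, to the block Toeplitz operator with symbol $[K\ F]$ still satisfying the Leech condition against $G$. Since $B$ vanishes on $N^\perp$ it suffices to prove $\|T_F^*v\|^2\le\|T_G^*v\|^2-\|T_K^*v\|^2$ for every $v\in H^2_m$. My plan is to combine two ingredients: the pointwise boundary factorization $R-FF^*=\Phi^*(I-\tht\tht^*)\Phi\ge0$ on $\BT$, immediate from $\tht\tht^*\le I$, which yields $T_{R-FF^*}\ge0$; and any Leech solution $X\in H^\iy_{p\ts q}$ with $GX=K$ and $\|X\|_\iy\le1$, which produces the identity $H_K^*=H_X^*T_G^*+M\,H_G^*$, where $M$ is the contractive anti-analytic multiplication by $X^*$. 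Through Douglas's lemma these should combine to give a contraction $C$ with $T_F^*=C(T_GT_G^*-T_KT_K^*)^{1/2}$, which is equivalent to the desired operator inequality and hence to (i).
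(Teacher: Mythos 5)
Your construction of $F$ and the verification that $F=\Phi^*\tht\in H^\iy_{m\ts k}$ are sound (your boundary-value argument via $\sM_\Phi^\perp=\overline{T_\Phi(N^\perp)}$ replaces the paper's computation $H_{F^*,+}=T_\tht^*H_\Phi=0$ from Corollary \ref{C:ImHPhiIncl}), the identity $B:=T_GT_G^*-T_KT_K^*-T_FT_F^*=T_\Phi^*P_{\sM_\Phi}T_\Phi+H_KH_K^*-H_GH_G^*$ agrees with Proposition \ref{propF}(i), and your kernel argument for the rank bound (ii) is a correct and pleasantly direct substitute for the block-matrix bookkeeping in Lemma \ref{lempos}. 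The problem is item (i), which you yourself identify as the main obstacle and for which you offer only a plan; that plan does not close. Your first ingredient, $T_{R-FF^*}\geq 0$, unpacks via $T_{FF^*}=T_FT_F^*+H_FH_F^*$ into $B\geq H_KH_K^*+H_FH_F^*-H_GH_G^*$, so it merely trades positivity of $B$ for positivity of $H_KH_K^*+H_FH_F^*-H_GH_G^*$, which is essentially equivalent to the statement being proved (compare \eqref{TvsH}). Your second ingredient does not supply this: the factorization $K=GX$ expresses $H_K^*$ through $H_G^*$ and hence bounds $H_KH_K^*$ \emph{above} by data involving $H_GH_G^*$ --- the wrong direction; to dominate $H_GH_G^*$ you would need a contractive factorization of $G$ through $[\,K\ F\,]$, which is not available. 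The final appeal to Douglas's lemma is circular in form: the contraction $C$ with $T_F^*=C(T_GT_G^*-T_KT_K^*)^{1/2}$ exists \emph{if and only if} (i) holds, and you never construct it. Invoking ``any Leech solution $X$'' also imports Leech's theorem itself, which the paper's proof deliberately avoids.

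The missing idea is the one isolated in Lemma \ref{lempos}, and it fits naturally onto what you already have. Put $D=H_GH_G^*-H_KH_K^*$, a selfadjoint operator with range in $N$, so that $\langle Dv,v\rangle=\langle Dv_1,v_1\rangle$ for $v=v_1+v_2$ with $v_1\in N$, $v_2\in N^\perp$. The hypothesis reads $\|T_\Phi v\|^2\geq\langle Dv,v\rangle$ for all $v$, and the claim reads $\|P_{\sM_\Phi}T_\Phi v\|^2\geq\langle Dv,v\rangle$. Since $P_{\sM_\Phi}T_\Phi v_2=0$ and $\langle Dv,v\rangle=\langle Dv_1,v_1\rangle$, it suffices to treat $v_1\in N$; now use your own duality identity $\sM_\Phi^\perp=\overline{T_\Phi(N^\perp)}$ to choose $v_2\in N^\perp$ with $T_\Phi v_2$ arbitrarily close to $-P_{\sM_\Phi^\perp}T_\Phi v_1$. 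Then $\|T_\Phi(v_1+v_2)\|^2=\|P_{\sM_\Phi}T_\Phi v_1\|^2+\|P_{\sM_\Phi^\perp}T_\Phi v_1+T_\Phi v_2\|^2$ converges to $\|P_{\sM_\Phi}T_\Phi v_1\|^2$, while $\langle D(v_1+v_2),v_1+v_2\rangle=\langle Dv_1,v_1\rangle$ is unchanged, and the hypothesis applied to $v_1+v_2$ gives $\|P_{\sM_\Phi}T_\Phi v_1\|^2\geq\langle Dv_1,v_1\rangle$ in the limit. This density argument (the injectivity of the compression of $T_\Phi^*$ to $\sM_\Phi^\perp\to N^\perp$ in the paper's formulation) is the actual content of (i); replace your sketch with it.
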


Given $F$ as in Theorem \ref{T:main}, we apply the Ball-Trent approach with $K$ replaced by $\wtil{K}=[\,K\ F\,]$. This yields  $H^\iy$-solutions $\wtil{X}=[\, X\ Y\,]$ of
\begin{align}
G(z)\begin{bmatrix} X(z)&\!\!\! Y(z) \end{bmatrix}=\begin{bmatrix} K(z)&\!\!\! F(z) \end{bmatrix}\quad (|z|<1),\ \ \mbox{and}\  \  \|\begin{bmatrix} X&\!\!\! Y \end{bmatrix}\|_\iy\leq 1.  \label{Leech2}
\end{align}
Note that \eqref{Leech2} implies that $X$ satisfies \eqref{Leech1}. Whether or not all solutions of \eqref{Leech1} can be obtained via this procedure is still an open problem.

This procedure is specifically of interest in case $G$ and $K$ are rational matrix functions. In that case the upper bound in (ii) is finite, and serves as an upper bound on the least possible McMillan degree of solutions $\wtil{X}$ to \eqref{Leech2}, hence the same upper bound applies to $X$. The following theorem provides some additional results for the case of rational data functions; a proof will be given in Section \ref{S:RKrat}.

%%%%%%%%%%%%
\begin{thm}\label{T:mainrat}
Let $G\in H^\iy_{m\ts p}$ and $K\in H^\iy_{m\ts q}$ be rational matrix functions such that $T_GT_G^*-T_KT_K^*$ is positive. Then the function $R\in L^\iy_{m\ts m}$ defined by \eqref{defR} admits an outer spectral factor $\Phi$. Moreover, in this case the functions $R$, $\Phi$, $\tht$ and $F$ defined in the above procedure are all rational matrix functions whose McMillan degrees satisfy
\begin{equation}\label{MMbounds}
\half\degM(R)=\degM(\Phi)\leq\degM(\tht)=\degM(F)=\dim\sM_{\Phi}<\iy,
\end{equation}
and $\tht$ is two-sided inner, i.e., $k=r$ and $\tht(e^{it})^*\tht(e^{it})=I_r =\tht(e^{it})\tht(e^{it})^*$ for each $t\in[0,2\pi]$. Finally, we have
\begin{equation}\label{TvsH}
T_GT_G^*-T_KT_K^*-T_FT_F^*=H_KH_K^*+H_FH_F^*-H_GH_G^*.
\end{equation}
In particular, the left hand side in inequality (ii) in Theorem \ref{T:main} is equal to $\rank(H_KH_K^*+H_FH_F^*-H_GH_G^*)$.
\end{thm}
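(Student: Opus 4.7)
The plan is to establish the conclusions in three stages: rationality together with finite-dimensionality of $\sM_\Phi$; the degree relations in \eqref{MMbounds} along with two-sidedness of $\tht$; and finally the identity \eqref{TvsH}.

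For the first stage, rationality of $R$ is immediate from \eqref{defR}, and $R\geq 0$ on $\BT$ follows from positivity of $T_R=T_GT_G^*-T_KT_K^*$. Classical rational spectral factorization (via, e.g., the state-space Bauer--Youla construction) then produces a rational outer spectral factor $\Phi\in H^\iy_{r\ts m}$ satisfying $\degM(R)=2\degM(\Phi)$. Since $\Phi$ is outer, $\ker T_\Phi^*=\{0\}$; combined with the description $\sM_\Phi=(T_\Phi^*)^{-1}\bigl(\overline{\im H_G+\im H_K}\bigr)$ from \eqref{sMGK} and Kronecker's theorem (giving $\dim\im H_G=\degM(G)$ and $\dim\im H_K=\degM(K)$), this yields $\dim\sM_\Phi\leq\degM(G)+\degM(K)<\iy$. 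Rationality of $\tht$ and $F$ will follow from finite-dimensionality of $\sM_\Phi$ once $k=r$ is established.

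Finite-dimensionality of $\sM_\Phi = (\ran T_\tht)^\perp$ forces $k=r$: extending the Beurling--Lax factor $\tht$ to a square two-sided inner $\wtil{\tht}\in H^\iy_{r\ts r}$ with $\wtil{\tht} H^2_r = \tht H^2_k \oplus \tht' H^2_{r-k}$ for a complementary inner $\tht'$, one sees that $(\ran T_\tht)^\perp$ contains the infinite-dimensional subspace $\tht' H^2_{r-k}$ whenever $k<r$. Hence $\tht$ is square rational inner, and therefore two-sided inner, with $\degM(\tht)=\dim(H^2_r\ominus\tht H^2_r)=\dim\sM_\Phi$ by the standard model-space dimension formula for rational inner matrix functions. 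The remaining degree assertions hinge on the pointwise identity $FF^*=\Phi^*\tht\tht^*\Phi=\Phi^*\Phi=R$ on $\BT$ (using $\tht\tht^*=I_r$ and $T_R=T_\Phi^*T_\Phi=T_{\Phi^*\Phi}$, valid for $\Phi\in H^\iy$): submultiplicativity $\degM(FF^*)\leq 2\degM(F)$ yields $\degM(\Phi)=\half\degM(R)\leq\degM(F)$. The equality $\degM(F)=\degM(\tht)$ is where I expect the main technical work; it seems to require a careful state-space realization argument exploiting $F=\Phi^*\tht$ together with the outerness of $\Phi$ and the two-sidedness of $\tht$, to rule out pole-zero cancellation in either direction, and this is the main obstacle I foresee.

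Finally, \eqref{TvsH} follows quickly from $GG^*=KK^*+FF^*$ on $\BT$ (already obtained) by passing to Toeplitz operators, $T_{GG^*}=T_{KK^*}+T_{FF^*}$, and then applying the standard identity $T_XT_X^*+H_XH_X^*=T_{XX^*}$, valid for $X\in H^\iy$, to each of $X\in\{G,K,F\}$ and rearranging. The final rank statement is then an immediate consequence.
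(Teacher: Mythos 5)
Your overall strategy tracks the paper's, and your derivation of \eqref{TvsH} from $FF^*=\Phi^*\tht\tht^*\Phi=R=GG^*-KK^*$ plus the identity $T_{XX^*}=T_XT_X^*+H_XH_X^*$ is sound (the paper instead routes this through Theorem \ref{T:rational} applied to $\wtil{K}=[\,K\ F\,]$, but the underlying computation is the same). However, there is a genuine gap: the equality $\degM(F)=\degM(\tht)$, which you explicitly leave open as ``the main obstacle,'' and which is also needed to complete the chain $\degM(F)=\dim\sM_{\Phi}$ in \eqref{MMbounds}. No state-space realization or pole--zero cancellation analysis is required; the paper's argument is one line. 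By the third identity in \eqref{UsefulIDs}, $H_F=H_{\tht^t(\Phi^t)^*,+}$ computed as $H_{\Phi^*\tht,+}=T_{\Phi}^*H_\tht$, and since $\Phi$ is outer, $\kr T_\Phi^*=\{0\}$, whence $\rank H_F=\rank H_\tht$, i.e.\ $\degM(F)=\degM(\tht)$ by Kronecker's theorem. The same identity combined with $H_\tht H_\tht^*=T_{\tht\tht^*}-T_\tht T_\tht^*=I-T_\tht T_\tht^*=P_{\sM_{\Phi}}$ gives $\degM(\tht)=\dim\sM_{\Phi}$ and $H_FH_F^*=T_\Phi^*P_{\sM_{\Phi}}T_\Phi$; the paper also gets $\degM(\Phi)\leq\dim\sM_{\Phi}$ directly from $\im H_\Phi\subset\sM_{\Phi}$ (Corollary \ref{C:ImHPhiIncl}) and $\degM(R)=2\degM(\Phi)$ from $H_{R,+}=T_\Phi^*H_\Phi$ together with $H_{R,-}=H_{R,+}^*$, rather than taking these from the factorization construction or from submultiplicativity of the degree.

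A second, smaller problem is your mechanism for proving $k=r$: a complementary inner function $\tht'\in H^\iy_{r\ts(r-k)}$ with $\wtil{\tht}=[\,\tht\ \tht'\,]$ square inner generally does not exist, because the pointwise orthogonal complement of the range of $\tht(e^{it})$ is typically spanned only by anti-analytic functions (for a $2\ts 1$ inner column $\sbm{\theta_1\\ \theta_2}$ the complement is spanned by $\sbm{-\ol{\theta_2}\\ \ \ol{\theta_1}}$). The conclusion you want --- that $\kr T_\tht^*=\sM_{\Phi}$ finite dimensional forces $\tht$ to be square, two-sided inner and rational --- is correct, but it needs a different argument; the paper simply cites \cite[Theorem 4.3.2]{FB10} for it. As written, this step of your proof would fail, so either cite such a result or replace the extension argument.
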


Thus, in case $G$ and $K$ are rational matrix functions, the problem reduces to computing a Kolmogorov decomposition of the right hand side of \eqref{TvsH}. Note that there are effective ways to computing Kolmogorov decompositions, cf., \cite{C96}. Moreover, the functions $R$, $\Phi$, $\tht$ and $F$ can be computed explicitly using state space techniques from mathematical systems theory (cf., \cite{BGKR08,FB10}), starting from a state space representation of the function $[\,G\ K\,]$. This will be the topic of a forthcoming paper of the present author together with A.E. Frazho and M.A. Kaashoek.

The paper consists of \ref{S:appendix} sections, not counting the present introduction. Section \ref{S:prelim} contains some of the notations and terminology as well as some operator theory preliminaries used in the sequel. The main result, Theorem \ref{T:main}, is proved in Section \ref{S:NewApproach}. In Section \ref{S:RKrat} the focus lays on the case that $G$ and $K$ are rational matrix functions; a proof of Theorem \ref{T:mainrat} will be given as well as a criterion for the case that $T_GT_G^*-T_KT_K^*$ has finite rank. The final section contains some general operator theoretical results, and their proofs, that are used in the preceding sections.

\setcounter{equation}{0}
\section{Preliminaries}\label{S:prelim}
In this section we introduce notations and terminology used throughout the paper and we present some operator theory preliminaries.

With {\em operator} we mean a continuous linear map acting between two Hilbert spaces. In particular, all operators in this paper are by definition bounded. Invertibility of an operator  means the operator has a bounded inverse. Let $\sH$ be a Hilbert space. A {\em subspace} of $\sH$ is a closed linear manifold within $\sH$. The identity operator on $\sH$ will denoted by $I_\sH$ and the $k\ts k$ identity matrix  by $I_k$. Often these subscripts $\sH$ and $k$ will be omitted.
We say that an operator $T$ on $\sH$ is \emph{positive} whenever the inner product $\lg Tu,u\rg\geq 0$ for each $u\in \sH$, and   $T$ is said to be  \emph{positive definite} whenever $T$ is both positive and invertible. The notations $T\geq0$ and $T>0$ will be used to indicate the positivity, respectively positive definiteness, of $T$. In case $T_1$ and $T_2$ are selfadjoint operators on $\sH$, we will write $T_1\geq T_2$, resp.\ $T_1>T_2$, to indicate $T_1-T_2\geq0$, resp.\ $T_1-T_2>0$.

The symbol $H_{m\ts p}^\iy $ will indicate the Hardy space of all uniformly bounded analytic ${m\ts p}$ matrix-valued functions in the open unit disc. For any $V\in H_{m\ts p}^\iy$ the supremum norm of $V$ is defined by $\|V\|_\iy=\sup_{|z|<1}\|V(z)\|$, making $H_{m\ts p}^\iy$ into a Banach space. Here we follow the convention that the norm $\|M\|$ of an $m\ts p$ matrix $M$ is equal to the norm of the operator from $\BC^p$ into $\BC^m$ induced by $M$ in the canonical way. We write $L_{m\ts p}^\infty$ for the Banach space consisting of all Lebesgue measurable, essentially bounded ${m\ts p}$-matrix functions on the unit circle $\BT$ together with the essential supremum norm, also denoted by $\|\ \|_\infty$. The space $H_{m\ts p}^\iy$ will be viewed both as a sub-Banach space of $L_{m\ts p}^\infty$ and as a Banach space in its own right.

With a function $Z\in  L_{m\ts p}^\iy$ we associate the functions $Z^*\in L^\iy_{p\ts m}$ and $Z^t\in L^\iy_{m \ts p}$ defined by
\begin{equation}\label{F*Ftil}
Z^*(e^{it})=Z(e^{it})^*\quad\mbox{and}\quad Z^t(e^{it})=Z(e^{-it}) \quad (\textup{a.e.\ } t\in[0,2\pi])
\end{equation}
For $V\in H^\iy_{m \ts p}$, the functions $V^*$ and $V^t$ can be uniquely extended to bounded analytic functions on the open exterior disc $\BC\backslash \overline{\BD}$, infinity included, via the formulas $V^*(z)=V(1/\bar{z})^*$ and $V^t(z)=V(1/z)$, $|z|>1$.

By $\ell^2(\BC^k)$ and $\ell^2_+(\BC^k)$ we denote the Hilbert spaces consisting of bilateral, respectively unilateral, square summable sequences with values in $\BC^k$. Viewing $\ell^2_+(\BC^k)$ as a sub-Hilbert space of $\ell^2(\BC^k)$, we write $\ell^2_-(\BC^k)$ for the orthogonal complement of $\ell^2_+(\BC^k)$ in $\ell^2(\BC^k)$.
The symbol $S_k$ stands for the (block) forward shift on $\ell^2_+(\BC^k)$, and $E_k$ denotes the canonical embedding of $\BC^k$ into $\ell^2_+(\BC^k)$ defined by $E_ku=\begin{bmatrix} u&0&0&\cdots\end{bmatrix}^\top$. Note that $I-S_kS_k^*=E_kE_k^*$.

Let $Z$ be a function in $L_{m\ts p}^\iy$ and denote the Fourier coefficients of $Z$ by $\ldots,Z_{-1},Z_0,Z_1,Z_2,\ldots$. Then we define the (block) {\em Toeplitz operator} $T_Z$ and (block) {\em Hankel operators} $H_{Z,+}$ and $H_{Z,-}$ associated with $Z$ by the operators mapping $\ell^2_+(\BC^p)$ into $\ell^2_+(\BC^m)$ given by their infinite block matrix representations
\begin{equation*}%\label{ToeplitzFHankelF}
T_Z =\sbm{
Z_0     & Z_{-1}     & Z_{-2}    & \cdots \\
Z_1     & Z_0        & Z_{-1}    & \cdots \\
Z_2     & Z_1        & Z_0       & \cdots \\
\vdots  & \vdots     & \vdots    & \ddots
},\
H_{Z,+} =\sbm{
Z_1     & Z_2    & Z_3    & \cdots \\
Z_2     & Z_3        & Z_4    & \cdots \\
Z_3     & Z_4        & Z_5       & \cdots \\
\vdots  & \vdots     & \vdots    & \ddots
},\
H_{Z,-} =\sbm{
Z_{-1}     & Z_{-2}    & Z_{-3}    & \cdots \\
Z_{-2}     & Z_{-3}    & Z_{-4}    & \cdots \\
Z_{-3}     & Z_{-4}    & Z_{-5}       & \cdots \\
\vdots  & \vdots     & \vdots    & \ddots
}.
\end{equation*}
We shall refer to $H_{Z,+}$ and $H_{Z,-}$ as the {\em analytic}, respectively {\em anti-analytic}, Hankel operator associated with $Z$. Note that $T_{Z^*}=T_Z^*$ and $H_{Z,+}^*=H_{Z^*,-}$. For $V\in H^\iy_{m\ts p}$ we have $H_{V,-}=0$, and we will simply write $H_V$ for $H_{V,+}$.

Now consider $U\in H^\iy_{n \ts p}$, $V\in H^\iy_{m \ts p}$ and $W\in H^\iy_{m \ts q}$. Then
the following useful identities apply (cf., \cite[Proposition 2.14]{BS90}):
\begin{equation}\label{UsefulIDs}
\begin{aligned}
&T_{V^*W}=T_{V}^*T_W,\quad
T_{UV^*}=T_U T_V^*+H_U H_V^*,\\
&H_{V^*W,+}=T_V^*H_W,\quad
H_{UV^*,+}=H_U T_{V^t}^*.
\end{aligned}
\end{equation}

The sets of rational matrix $L^\iy_{p \ts m}$- and $H^\iy_{p \ts m}$-functions will be denoted by $\fR L^\iy_{m\ts p}$ and $\fR H^\iy_{m\ts p}$, respectively. For a $m\ts p$ rational matrix function $Z$ the {\em McMillan degree} is denoted by $\degM(Z)$ and equals the sum of the local degrees, $\degM(Z)=\sum_{w\in\BC}\degM(Z,w)$. Here the {\em local degree} $\degM(Z,w)$ of $Z$ at $w$ is defined to be the rank of the Hankel operator defined by the negative Fourier coefficients of the Fourier expansion if $Z$ in a deleted neighborhood of $w$. See Section 8.4 in \cite{BGKR08} for more details. It is well known that for $V\in\fR H^\iy_{m\ts p}$ the MacMillan degree $\degM(V)$ equals the rank of the Hankel operator $H_V$. Moreover, for $Z\in\fR L^\iy_{m\ts p}$ the MacMillan degree $\degM(Z)$ is equal to $\rank (H_{Z,+})+\rank (H_{Z,-})$.

\setcounter{equation}{0}
\section{Proof of Theorem \ref{T:main}}\label{S:NewApproach}

Let $G\in H_{m\ts p}^\iy$ and $K\in H_{m\ts q}^\iy$, and define $R\in L^\iy_{m\ts m}$ by \eqref{defR}. Throughout this section we shall assume that $T_GT_G^*\geq T_KT_K^*$. This implies that $R$ is positive on $\BT$. Indeed, note that the positivity of the kernel $L$ in \eqref{poskern} implies that $(1-|z|^2)L(z,z)=G(z)G(z)^*-K(z)K(z)^*$ is positive for each $z\in\BD$. Hence the same is true for the non-tangential limits of $(1-|z|^2)L(z,z)$ to the unit circle, which exist for  almost all points on the unit circle, where the values coincide with the values of $R$.

Since the function $R$ is positive on $\BT$, it follows that $T_R$ is a positive operator on $\ell_+^2(\BC^m)$. Under some additional constraints on $T_R$, the positivity of $T_R$ implies that $R$ admits an \emph{outer spectral factor} (see \cite[Proposition V.4.2]{NFBK09}), that is,  there exists a function $\Phi\in H_{r\ts m}^\iy$, for some integer $r\leq m$, such that
\begin{equation}\label{outs}
R= \Phi^* \Phi, \ \ \mbox{i.e.,}\ \  T_R=T_\Phi^* T_\Phi,
\quad\mbox{and}\quad \kr T_\Phi^*=\{0\}.
\end{equation}
The latter condition says that $T_\Phi$ has dense range, i.e., $\Phi$ is outer. The function  $\Phi$ is unique up to a unitary constant matrix on the left, that is, if $\Psi$ is another outer function satisfying $R= \Psi ^* \Psi $, then $\Phi$ and $\Psi$ are matrix functions of the same size, and $\Phi(\cdot) = U\Psi(\cdot)$ where $U$ is a constant unitary  matrix. With some abuse of terminology, we shall refer to $\Phi$ as {\em the} outer spectral factor of $R$. See \cite{NFBK09,RR85} for further details.

We start with a few preliminary results.

\begin{lem}\label{lemNM}
Let $\Phi$ be the $r\ts m$ outer spectral factor of the function  $R$ given by \eqref{defR}. Set $\sN_{\Phi}=\overline{\im H_G + \im H_K}$, and let $\sM_{\Phi}$ be the inverse image of
$\sN_{\Phi}$ under the map $T_\Phi^*$, i.e.,
\begin{equation}
\label{defM}
\sM_{\Phi}=(T_\Phi^*)^{-1}\left[\sN_{\Phi}\right]=\{f\in \ell_+^2(\BC^r)\mid T_\Phi^*f\in  \overline{\im H_G + \im H_K}\,\}.
\end{equation}
Then $\sM_{\Phi}$ is a subspace of $\ell^2_+(\BC^r)$, $\dim \sM_{\Phi}\leq \dim \sN_{\Phi}$, and $\sM_{\Phi}$ is invariant under the backward shift $S_r^*$. Moreover,
\begin{equation}
\label{inM}
\im H_\Phi E_m =\im S_r^*T_\Phi E_m\subset  \sM_{\Phi}.
\end{equation}
\end{lem}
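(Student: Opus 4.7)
The plan is to verify the four claims in the order stated, using the intertwining relations and the identities from \eqref{UsefulIDs}.

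First, I would observe that $\sM_\Phi$ is the preimage of the subspace $\sN_\Phi$ under the bounded operator $T_\Phi^*$; since preimages of closed linear subspaces under bounded linear maps are closed linear subspaces, this handles the claim that $\sM_\Phi$ is a subspace. For the dimension estimate $\dim\sM_\Phi\leq\dim\sN_\Phi$, the key input is the outer condition $\kr T_\Phi^*=\{0\}$ from \eqref{outs}, which makes $T_\Phi^*$ injective, hence $T_\Phi^*\big|_{\sM_\Phi}:\sM_\Phi\to\sN_\Phi$ is an injective linear map, giving the inequality.

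For the $S_r^*$-invariance, I would use the standard intertwining relation $T_\Phi S_m=S_r T_\Phi$ (which holds for any $\Phi\in H^\iy_{r\ts m}$), taking adjoints to obtain $T_\Phi^* S_r^*=S_m^* T_\Phi^*$. Thus for $f\in\sM_\Phi$, $T_\Phi^* S_r^* f = S_m^* T_\Phi^* f$, so it suffices to show $\sN_\Phi$ is $S_m^*$-invariant. This reduces to the shift-Hankel relations $S_m^* H_G = H_G S_p$ and $S_m^* H_K = H_K S_q$ (a direct consequence of the block matrix form of the Hankels), which together imply $S_m^*(\im H_G+\im H_K)\subseteq \im H_G+\im H_K$; taking closures gives $S_m^*\sN_\Phi\subseteq \sN_\Phi$, as required.

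For the final inclusion \eqref{inM}, I would first check the identity $H_\Phi E_m = S_r^* T_\Phi E_m$ by direct inspection: both sides, applied to $u\in\BC^m$, produce the column vector $(\Phi_1 u,\Phi_2 u,\Phi_3 u,\ldots)^\top$. Then, to show membership in $\sM_\Phi$, I need $T_\Phi^* H_\Phi E_m u\in\sN_\Phi$ for every $u\in\BC^m$. Here I would apply the identity $H_{V^*W,+}=T_V^*H_W$ from \eqref{UsefulIDs} with $V=W=\Phi$ to obtain $T_\Phi^* H_\Phi = H_{\Phi^*\Phi,+}=H_{R,+}$, and then use the identity $H_{UV^*,+}=H_U T_{V^t}^*$ to expand
\[
H_{R,+}=H_{GG^*,+}-H_{KK^*,+}=H_G T_{G^t}^* - H_K T_{K^t}^*.
\]
Applying both sides to $E_m u$ shows $T_\Phi^* H_\Phi E_m u\in\im H_G+\im H_K\subseteq\sN_\Phi$, which finishes the proof.

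The only step I expect to be slightly subtle is the last one, because it requires recognizing that the relevant Hankel decomposition of the $L^\iy$ function $R=GG^*-KK^*$ is the one that produces ranges inside $\im H_G+\im H_K$; the other available identities ($T_R=T_\Phi^*T_\Phi$, or expansions involving $T_{GG^*}$) do not directly give this. Once the correct identity from \eqref{UsefulIDs} is singled out, the verification is immediate.
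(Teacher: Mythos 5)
Your proof is correct. The first three claims (closedness, the dimension bound via injectivity of $T_\Phi^*$, and the $S_r^*$-invariance via the intertwining relations $T_\Phi^* S_r^* = S_m^* T_\Phi^*$ and $S_m^* H_G = H_G S_p$, $S_m^* H_K = H_K S_q$) are handled exactly as in the paper. Where you diverge is the final inclusion \eqref{inM}: the paper computes $T_\Phi^* S_r^* T_\Phi E_m u = S_m^* T_R E_m u$, expands $T_R$ via the second identity of \eqref{UsefulIDs} as $(T_GT_G^*+H_GH_G^*)-(T_KT_K^*+H_KH_K^*)$, and then uses the explicit first-column identities $T_G^*E_m=E_pG(0)^*$ and $S_m^*T_GE_p=H_GE_p$ to land in $\im H_G+\im H_K$; you instead combine the third and fourth identities of \eqref{UsefulIDs} to get the operator identity $T_\Phi^* H_\Phi = H_{R,+} = H_GT_{G^t}^* - H_KT_{K^t}^*$ and read off the range inclusion. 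Both are short and correct, but your identity is global rather than column-by-column: it shows at once that $\im(T_\Phi^* H_\Phi)\subset \sN_\Phi$, i.e.\ $\im H_\Phi\subset\sM_\Phi$, which is precisely Corollary \ref{C:ImHPhiIncl}; the paper has to derive that corollary separately by propagating the first-column inclusion through the $S_r^*$-invariance of $\sM_\Phi$. So your route buys the corollary for free, at the cost of invoking two of the product identities instead of elementary matrix inspection.
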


\begin{proof}[\bf Proof.]
Since $T_\Phi^*$ is a continuous linear map, the inverse image of the closed linear manifold  $\sN_{\Phi}$ under $T_\Phi^*$ is  again linear and closed. Thus $\sM_{\Phi}$ is a subspace. The bound on $\dim\sM_{\Phi}$ follows from the injectivity of $T_\Phi^*$. The fact that $S_m^*H_G=H_GS_p$ and $S_m^*H_K=H_KS_q$ implies that
\begin{align*}
S_m^*\Big(\overline{\im H_G+ \im H_K}\Big) &\subset \overline{S_m^*\im H_G +S_m^* \im H_K}\\
&\qquad=\overline{\im H_GS_p + \im H_K S_q} \subset \overline{\im H_G+ \im H_K}.
\end{align*}
Thus $\sN_{\Phi}$ is invariant under $S_m^*$. Take $f\in \sM_{\Phi}$, i.e., $T_\Phi^*f\in \sN_{\Phi}$. Using  $S_rT_\Phi =T_\Phi S_m $ we have
\[
T_\Phi^*S_r^*f=S_m^*T_\Phi^*f\in  S_m^*\sN_{\Phi}\subset \sN_{\Phi}.
\]
Thus $S_r^*f\in \sM_{\Phi}$.  Hence $\sM_{\Phi}$ is invariant under the backward shift $S_r^*$.

Next we prove \eqref{inM}. Inspecting the first columns in $H_\Phi$ and $T_\Phi$ yields $H_\Phi E_m =S_r^*T_\Phi E_m$. Hence the identity in \eqref{inM} holds. Take $u\in \BC^m$, and put $x=S^*T_\Phi E_mu$.
Then
\begin{align*}
T_\Phi^*x &=  T_\Phi^*S_r^*T_\Phi E_mu=S_m^*T_\Phi^*T_\Phi E_mu
=S_m^*T_RE_mu\\
&=S_m^*(T_GT_G^*+H_GH_G^*)E_mu-S_m^*(T_KT_K^*+H_KH_K^*)E_mu.
\end{align*}
Note that $T_G^*E_m=E_p G(0)^*$, $S_m^*T_GE_p=H_GE_p$ and $S_m^* H_G=H_G S_p$.
Hence
\[
S_m^*(T_GT_G^*+H_GH_G^*)E_mu=
H_G(E_pG(0)^*+ S_pH_G^*E)u\in \im H_G.
\]
Similarly, $S_m^*(T_KT_K^*+H_KH_K^*)E_mu\in \im H_K$.
This shows that $T_{\Phi}^*x$ belongs to $\im H_G+\im H_K\subset \sN_{\Phi}$, and thus $x\in\sM_{\Phi}$.  Hence $\im S_r^*T_\Phi E_m\subset\sM_{\Phi}$.
\end{proof}

\begin{cor}\label{C:ImHPhiIncl}
Let $\Phi$ be the $r\ts m$ outer spectral factor of the function  $R$ given by \eqref{defR}. Define $\sM_{\Phi}$ by \eqref{defM}. Then $\im H_\Phi\subset\sM_{\Phi}$.
\end{cor}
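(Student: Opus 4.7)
The plan is to bootstrap from the inclusion $\im H_\Phi E_m \subset \sM_\Phi$ already established in Lemma \ref{lemNM} by exploiting two facts: the intertwining relation between $H_\Phi$ and the shifts, and the $S_r^*$-invariance of $\sM_\Phi$.

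First I would record the intertwining $H_\Phi S_m = S_r^* H_\Phi$, which follows directly from the block matrix form of a Hankel operator (shifting the input forward is the same as trimming the top row of the output, i.e., applying $S_r^*$). Combining this with $\im H_\Phi E_m \subset \sM_\Phi$ and the $S_r^*$-invariance of $\sM_\Phi$ from Lemma \ref{lemNM}, we get by iteration
\[
H_\Phi S_m^k E_m u = (S_r^*)^k H_\Phi E_m u \in \sM_\Phi \qquad (k\geq 0,\ u\in\BC^m).
\]

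Next I would observe that the set $\{S_m^k E_m u : k\geq 0,\ u\in\BC^m\}$ has dense linear span in $\ell_+^2(\BC^m)$, since finite linear combinations of such vectors produce precisely the finitely supported sequences. Hence the finitely supported vectors are mapped by $H_\Phi$ into $\sM_\Phi$. Since $H_\Phi$ is continuous and $\sM_\Phi$ is closed (being a subspace, by Lemma \ref{lemNM}), passing to the limit gives $H_\Phi f \in \sM_\Phi$ for every $f\in\ell_+^2(\BC^m)$, which is exactly $\im H_\Phi \subset \sM_\Phi$.

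There is no real obstacle here; the corollary is a straightforward consequence of the two already-established properties of $\sM_\Phi$, namely that it contains $\im H_\Phi E_m$ and is $S_r^*$-invariant. The only thing to be careful about is to use the Hankel intertwining on the correct side (input vs.\ output shift), and to invoke closedness of $\sM_\Phi$ when passing from the dense set of finitely supported sequences to all of $\ell_+^2(\BC^m)$.
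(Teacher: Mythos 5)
Your proposal is correct and follows essentially the same route as the paper: both use the intertwining $H_\Phi S_m = S_r^* H_\Phi$, the inclusion $\im H_\Phi E_m\subset\sM_\Phi$ from Lemma \ref{lemNM}, and the $S_r^*$-invariance of $\sM_\Phi$ to show each block column of $H_\Phi$ maps into $\sM_\Phi$, and then conclude for the full range. Your explicit appeal to continuity of $H_\Phi$ and closedness of $\sM_\Phi$ in the final density step is a point the paper leaves implicit, but it is the same argument.
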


\begin{proof}[\bf Proof.]
By \eqref{inM}, we see that the range of the first block column of $H_\Phi$ is in $\sM_{\Phi}$.
Since $H_\Phi S_m=S_r^* H_\Phi$, it follows that $H_\Phi S_m^l=S_r^{*l} H_\Phi$ holds for any positive integer $l$. The fact that $\sM_{\Phi}$ is invariant under $S_r^*$ then shows that for any positive integer $l$
\[
\im  H_\Phi S_m^l E_m=\im S_r^{*l}H_\Phi E_m=S_r^{*l} \im H_\Phi E_m\subset
S_r^{*l}\sM_{\Phi}\subset\sM_{\Phi}.
\]
This shows that the range of each column of $H_\Phi$ is in $\sM_{\Phi}$, and thus the range of  $H_\Phi$ is included in $\sM_{\Phi}$.
\end{proof}

By the Beurling-Lax-Halmos theorem, the fact that the space $\sM_{\Phi}$  is invariant under the backward shift implies $\sM_{\Phi}=\kr T_\tht^*$ for some inner function $\tht\in H_{r\ts k}^\iy$, with $k$ some nonnegative integer, $k\leq r$. This $\tht$ is unique up to a constant unitary matrix from the right. Despite this mild form of non-uniqueness, we shall refer to $\tht$  as {\em the} inner function associated with the space $\sM_{\Phi}$.

\begin{prop}\label{propF}
Let $\Phi$ be the $r\ts m$ outer spectral factor of the function $R$ given by \eqref{defR}, and let $\tht$ be the $r\ts k$ inner function associated with the space $\sM_{\Phi}$ in \eqref{defM}. Then $F=\Phi^*\tht$ belongs to $H_{m\ts k}^\iy$. Moreover, we have
\begin{itemize}
\item[(i)] $T_G T_G^*-T_KT_K^*-T_F T_F^*=T_\Phi^* P_{\sM_{\Phi}} T_\Phi -H_G H_G^*
+H_KH_K^*\geq 0$;

\item[(ii)] $\rank(T_G T_G^*-T_KT_K^*-T_F T_F^*)\leq \dim(\overline{\im H_G+\im H_K})$.

\end{itemize}
Here $P_{\sM_{\Phi}}$ is the orthogonal projection on $\ell_+^2(\BC^k)$ with range $\sM_{\Phi}$.
If in addition $H_G H_G^*-H_KH_K^*\geq 0$, then $\rank(T_G T_G^*-T_KT_K^*-T_F T_F^*)\leq \dim\sM_{\Phi}$.
\end{prop}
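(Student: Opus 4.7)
My plan for showing $F = \Phi^*\tht \in H^\iy_{m\ts k}$ is to use Corollary~\ref{C:ImHPhiIncl}: the inclusion $\im H_\Phi \subset \sM_\Phi = \kr T_\tht^*$ yields $T_\tht^* H_\Phi = 0$. Applying the identity $H_{V^*W,+} = T_V^* H_W$ from~\eqref{UsefulIDs} with $V=\tht$, $W=\Phi$ gives $H_{\tht^*\Phi,+}=0$, so all strictly positive Fourier coefficients of $\tht^*\Phi$ vanish. Hence $\tht^*\Phi$ is co-analytic; its adjoint $F=\Phi^*\tht$ is analytic, and boundedness is inherited from $\Phi,\tht\in H^\iy$.

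The algebraic identity in (i) follows directly: from $T_{V^*W} = T_V^* T_W$ we have $T_F = T_{\Phi^*\tht} = T_\Phi^* T_\tht$. Since $\tht$ is inner with $\im T_\tht = \sM_\Phi^\perp$, the operator $T_\tht$ is an isometry onto $\sM_\Phi^\perp$, so $T_\tht T_\tht^* = I - P_{\sM_\Phi}$. Thus $T_FT_F^* = T_\Phi^* T_\Phi - T_\Phi^* P_{\sM_\Phi} T_\Phi$. Combining with $T_\Phi^* T_\Phi = T_R = T_GT_G^* - T_KT_K^* + H_GH_G^* - H_KH_K^*$ (obtained from $T_{GG^*} - T_{KK^*} = T_R$ and two applications of $T_{UU^*} = T_U T_U^* + H_U H_U^*$) and rearranging yields the identity in~(i).

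Positivity in (i) is the main obstacle. My approach is first to derive the auxiliary identity $T_\Phi^* P_{\sM_\Phi} T_\Phi = H_F H_F^* + T_{R-FF^*}$, obtained by writing $T_R = T_{FF^*} + T_{R-FF^*}$ and $T_{FF^*}=T_FT_F^*+H_FH_F^*$; this exhibits $T_\Phi^* P_{\sM_\Phi} T_\Phi$ as a sum of two positive operators, since $R - FF^* = \Phi^*(I - \tht\tht^*)\Phi$ is pointwise positive semidefinite (using $\tht^*\tht = I_k$). Substituted into (i), the positivity reduces to $H_KH_K^* + H_FH_F^* + T_{R-FF^*} \geq H_GH_G^*$, which I would attack via Douglas' lemma by constructing a contractive factorization of $H_G^*$ through $H_F^*$, $H_K^*$, and $T_{R-FF^*}^{1/2}$. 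The required factorization is to be built from the key Hankel identity $T_\Phi^* H_\Phi = H_G T_{G^t}^* - H_K T_{K^t}^*$ (obtained from $H_{V^*W,+} = T_V^* H_W$ with $V=W=\Phi$ and $\Phi^*\Phi = R = GG^* - KK^*$), which, combined with $\im H_\Phi \subset \sM_\Phi$, rewrites $H_G T_{G^t}^*$ as $T_\Phi^* P_{\sM_\Phi} H_\Phi + H_K T_{K^t}^*$. Extracting the contractive factorization of $H_G$ itself from this identity is the delicate step.

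For (ii), once positivity is established, each of the three terms on the right-hand side of the identity in (i) has range in $\sN_\Phi$: the operator $T_\Phi^* P_{\sM_\Phi}$ by definition of $\sM_\Phi$, and the Hankel squares $H_GH_G^*, H_KH_K^*$ because $\im H_G, \im H_K \subset \sN_\Phi$. Hence the rank is at most $\dim \sN_\Phi$. Under the extra hypothesis $H_GH_G^* \geq H_KH_K^*$, Douglas' lemma gives $\im H_K \subset \overline{\im H_G}$, so $\sN_\Phi = \overline{\im H_G}$, and a refinement of the range analysis shows that the combined range lies in $T_\Phi^*(\sM_\Phi)$, whose dimension equals $\dim \sM_\Phi$ by injectivity of $T_\Phi^*$.
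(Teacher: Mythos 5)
Your argument that $F\in H^\iy_{m\ts k}$ and your derivation of the algebraic identity in (i) are correct and coincide with the paper's proof. The genuine gap is the positivity in (i), which you correctly identify as the main obstacle and then do not resolve: the contractive factorization of $H_G^*$ through $H_K^*$, $H_F^*$ and $T_{R-FF^*}^{1/2}$ is announced but never constructed, and you concede the crucial step is ``delicate.'' Moreover, the rewriting you propose gains nothing: your auxiliary identity $T_\Phi^*P_{\sM_\Phi}T_\Phi=H_FH_F^*+T_{R-FF^*}$ is a correct identity, but substituting it merely restates the claim (in the rational case one even has $FF^*=R$, so $T_{R-FF^*}=0$ and your target inequality $H_KH_K^*+H_FH_F^*+T_{R-FF^*}\geq H_GH_G^*$ is literally the positivity statement \eqref{TvsH} you set out to prove). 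The idea your proposal is missing is the one encapsulated in Lemma \ref{lempos}: because $\sM_\Phi$ is the \emph{full} inverse image of $\sN_\Phi$ under $Y=T_\Phi^*$, the operator $Y$ is block upper triangular with respect to $\sM_\Phi\oplus\sM_\Phi^\perp$ and $\sN_\Phi\oplus\sN_\Phi^\perp$ with an injective corner $Y_3:\sM_\Phi^\perp\to\sN_\Phi^\perp$, while $X=H_GH_G^*-H_KH_K^*$ is supported on $\sN_\Phi$. A congruence together with a dense-range argument then shows that $YY^*-X\geq0$ (which is exactly the hypothesis $T_GT_G^*-T_KT_K^*\geq0$, via \eqref{hank44}) is \emph{equivalent} to $YP_{\sM_\Phi}Y^*-X\geq0$, i.e.\ to the positivity in (i). Without this (or some substitute for it) your proof of (i) is incomplete.

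Your rank bound in (ii) is fine as a range-inclusion argument (and in fact needs no positivity: all three summands in the right-hand side of (i) have range in $\sN_\Phi$), matching the paper. However, the final claim under the extra hypothesis $H_GH_G^*-H_KH_K^*\geq0$ is also not established by your sketch: the set $T_\Phi^*(\sM_\Phi)=\sN_\Phi\cap\im T_\Phi^*$ is in general a proper, non-closed linear manifold in $\sN_\Phi$, so there is no a priori reason for the range of $H_GH_G^*-H_KH_K^*$ to lie in it, and ``a refinement of the range analysis'' is not an argument. The paper's route is different: from the positivity already proved one has $0\leq T_\Phi^*P_{\sM_\Phi}T_\Phi-(H_GH_G^*-H_KH_K^*)\leq T_\Phi^*P_{\sM_\Phi}T_\Phi$, and two applications of Douglas' factorization lemma, together with the injectivity of $T_\Phi^*$ (which gives $\rank\bigl(T_\Phi^*P_{\sM_\Phi}T_\Phi\bigr)=\dim\sM_\Phi$), yield $\rank\bigl(T_\Phi^*P_{\sM_\Phi}T_\Phi-H_GH_G^*+H_KH_K^*\bigr)\leq\dim\sM_\Phi$. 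You should replace your range argument for this last claim by that comparison-of-positive-operators argument.
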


\begin{proof}[\bf Proof.]
Since $\Phi^*$ and $\tht$ are matrix-valued $L^\iy$-functions, we have $F\in L^\iy_{m\ts k}$. To see that $F\in H^\iy_{m\ts k}$ it suffices to show that $H_{F,-}=0$. However, this is the same as showing that $H_{F^*,+}=0$. Note that $\ker T_{\tht}^*=\ell^2_+(\BC^{r})\ominus \sM_{G,K}$, by definition of $\tht$. Hence $H_\Phi T_{\tht}=0$, by Corollary \ref{C:ImHPhiIncl}. Thus the third identity in \eqref{UsefulIDs} yields
\[
H_{F^*,+}=H_{\tht^*\Phi,+}=T_{\Phi}^*H_\Phi=0,
\]
and it follows that $F\in H^\iy_{m\ts k}$, as claimed.

%Proving that $F$ is a $H^\iy$ function is equivalent to  proving that $T_F$ is block lower triangular. The block Toeplitz structure of $T_F$ implies
%$T_F$ admits the following partitioning:
%\[
%T_F=
%\begin{bmatrix}
%F(0)  &  E_m^*T_FS_k  \\[.2cm]
%S_m^*T_FE_k &  T_F
%\end{bmatrix}.
%\]
%Because $T_F$ reappears in the right lower corner, it suffices to prove $E_m^*T_FS_k=0$. Note that  $T_{\Phi}^*$ and $T_\tht$ admit similar $2\ts 2$ operator matrix representation as $T_F$ above.  In fact, using that both $\Phi$ and $\tht$ are $H^\iy$ functions, we have
%\[
%T_\Phi^*=
%\begin{bmatrix}
%\Phi(0)^*   &  E_m^*T_\Phi^* S_r  \\[.2cm]
%0&  T_\Phi^*
%\end{bmatrix},  \quad T_\tht=
%\begin{bmatrix}
%\tht(0)    &  0  \\[.2cm]
%S_r^*T_\tht E_k &  T_\tht
%\end{bmatrix}.
%\]
%Since $\tht$ is a $H^\iy$ function, we have $T_F=T_{\Phi^*}T_\tht=T_{\Phi}^*T_\tht$.  It follows that
%\[
%E_m^*T_FS_k= \begin{bmatrix} \Phi(0)^*   &  E_m^*T_\Phi^* S_r \end{bmatrix}
%\begin{bmatrix}
%0  \\[.2cm]
%T_\tht
%\end{bmatrix}= E_m^*T_\Phi^*S_rT_\tht.
%\]
%Now we use \eqref{inM} and $\sM_{\Phi}=\kr T_\tht^*$.  This shows that  $T_\tht^* S_r^*T_\Phi E_m=0$, and hence $E_m^*T_\Phi^*S_rT_\tht=0$. Thus $T_F$ is block lower triangular.

Next we deal with item (i). Since $\sM_{\Phi}=\kr T_\tht^*$ and $\tht$ is inner, $\sM_{\Phi}^\perp=\im T_\tht$ and $T_\tht T_\tht^*$ is the orthogonal projection onto $\sM_{\Phi}^\perp$. In particular, $I-P_{\sM_{\Phi}}=T_\tht T_\tht^*$.  Applying the second identity in \eqref{UsefulIDs} yields
\begin{equation}\label{hank44}
T_{R} = (T_G T_G^*-T_KT_K^*) + (H_G H_G^*-H_KH_K^*).
\end{equation}
With   \eqref{hank44} and $I-P_{\sM_{\Phi}}=T_\tht T_\tht^*$ we obtain
\begin{align*}
&(T_G T_G^*-T_KT_K^*)+ (H_G H_G^*- H_KH_K^*) =T_R= T_\Phi^* T_\Phi=\\
    &\hspace{1cm} = T_\Phi^* P_{\sM_{\Phi}} T_\Phi +T_\Phi^*(I-P_{\sM_{\Phi}}) T_\Phi=T_\Phi^* P_{\sM_{\Phi}} T_\Phi+T_\Phi^* T_\tht T_\tht^* T_\Phi=\\
    &\hspace{1cm}=T_\Phi^* P_{\sM_{\Phi}} T_\Phi+T_F T_F^*.
\end{align*}
Here we used that $T_\Phi^* T_\tht=T_{\Phi^*}T_\tht=T_F$. This proves the identity in (i).

To show $T_\Phi^* P_{\sM_{\Phi}} T_\Phi-H_G H_G^*+H_KH_K^*$ is positive and to prove the rank constraint on this operator, we apply
Lemma \ref{lempos}  with the following choices of spaces and operators:
\begin{align*}
&\sV= \ell_+^2(\BC^m),\quad   \sV_1=\sN_{\Phi}, \quad   \sV_2=\sV\ominus \sN_{\Phi} \quad X= H_GH_G^*-H_KH_K^*,\\
& \sW=\ell_+^2(\BC^r), \quad \sW_1=\sM_{\Phi}, \quad \sW_2=\sW\ominus \sM_{\Phi}, \quad Y=T_\Phi^* .
\end{align*}
Here $\sN_{\Phi}$ and $\sM_{\Phi}$ are the spaces defined in Lemma \ref{lemNM}. In particular,
\begin{align*}
&X\sV=\im (H_GH_G^*-H_KH_K^*)\subset \Big(\overline{\im H_G+ \im H_K}\Big) =\sN_{\Phi}=\sV_1,   \\
&\hspace{2cm}Y^{-1}[\sV_1]  =  (T_\Phi^*)^{-1}[\sN_{\Phi}]= \sM_{\Phi}=\sW_1.
\end{align*}
Furthermore, we have
\begin{align*}
YY^*-X   & = T_\Phi^* T_\Phi-H_GH_G^*+H_KH_K^*=T_R-H_G H_G^*+H_KH_K^*\\
    &  =T_G T_G^*-T_KT_K\geq 0.
\end{align*}
Hence $T_\Phi P_{\sM_{\Phi}} T_\Phi^*-H_G H_G^*+H_KH_K^*=YP_{\sW_1} Y^*-X$ is positive
by \eqref{pos2M}, and the rank constraint (ii) follows from Lemma \ref{lempos} as well.

Moreover, note that $H_GH_G^*-H_KH_K^*\geq0$ translates to $X\geq0$. Thus, by the last statement of Lemma \ref{lempos} we find that
\[
\rank(T_\Phi^*P_{\sM_{\Phi}} T_\Phi-H_GH_G^*+H_KH_K^*)=\rank(YP_{\sW_1}Y^*-X)\leq \dim\sW_1,
\]
which, together with $\dim\sW_1=\dim\sM_{\Phi}$, proves the last claim.
\end{proof}

We will now prove the main result of the present paper.

\begin{proof}[\bf Proof of Theorem \ref{T:main}.]
Let $\sM_{\Phi}$ and $\sN_{\Phi}$ be as in Lemma \ref{lemNM}, and define  $F$ as in Proposition \ref{propF}. Thus   $F=\Phi^*\tht$, where $\Phi$ is the $r\ts m$ outer spectral factor of  the function $R$, and   $\tht$ is the $r\ts k$ inner function associated with the space $\sM_{\Phi}$ in \eqref{defM}. We know that $F\in H^\iy_{m\ts k}$.  With this choice of $F$, Proposition \ref{propF} tells us directly that items (i) and (ii) in Theorem \ref{T:main} are fulfilled.
\end{proof}

%%%%%%%%%%%%%%%%%%%%%%%%%%%%%%%%%%%%%%%%%%%%%%%%%%%%%%%%%%%%%%%%%%%%%%%%%%%%%
\section{The case where $G$ and $K$ are rational matrix functions.}
\label{S:RKrat}\setcounter{equation}{0}

Let $G\in\fR H^\iy_{m\ts p}$ and $K\in\fR H^\iy_{m\ts q}$ such that $T_GT_G^*-T_KT_K\geq0$. The aim of this section is to prove Theorem \ref{T:mainrat}. In addition we will derive a criterion for the case that $\rank (T_GT_G^*-T_KT_K^*)<\iy$.

In the previous section we observed that $T_R\geq 0$, where $R\in L^\iy_{m\ts m}$ is given by \eqref{defR}. Since $G$ and $K$ are rational, so is $R$, and this, together with $T_R\geq 0$, implies $R$ admits an outer spectral factor $\Phi\in\fR H^\iy_{r\ts m}$, sor some $r\leq m$, see \cite[Section 6.6]{RR85}. Also note that $\d(G)=\rank H_G<\iy$ and $\d(K)=\rank H_K<\iy$ imply that the subspace $\sN_{\Phi}$ of Lemma \ref{lemNM} is finite dimensional, and hence the subspace $\sM_{\Phi}$ in \eqref{defM} is finite dimensional, since $\ker T_\Phi^*=\{0\}$. Then Theorem 4.3.2 in \cite{FB10} yields that the inner function $\tht$ associated with ${\sM_{\Phi}}$ is a two-sided inner rational matrix function, that is, $\tht\in \fR H^\iy_{r\ts r}$ and $\tht \tht^*=\tht^*\tht$ is identically equal to $I_r$.

The next proposition provides the relations between the McMillan degrees given in Theorem \ref{T:mainrat}.

%%%%%%%%%%%%
\begin{prop}\label{P:degrees}
Let $G\in\fR H_{m\ts p}^\iy$ and $K\in\fR H_{m\ts q}^\iy$ with $T_GT_G^*-T_KT_K^*\geq0$. Define ${\sM_{\Phi}}$ as in \eqref{defM}. Then the functions $R$, $\Phi$, $\tht$ and $F$ defined in Section \ref{S:NewApproach} are all rational matrix functions and the following bounds on their McMillan degrees apply:
\begin{equation}\label{degMbounds}
\half\degM(R)=\degM(\Phi)\leq\degM(F)=\degM(\tht)=\dim{\sM_{\Phi}}.
\end{equation}
Moreover, we have $H_\tht H_\tht^*=P_{\sM_{\Phi}}$ and $H_FH_F^*=T_\Phi^* P_{\sM_{\Phi}} T_\Phi$.
\end{prop}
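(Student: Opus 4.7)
The plan is to reduce every claim in the proposition to a handful of algebraic identities for Toeplitz and Hankel products, combined with the rank-equals-McMillan-degree principle for rational matrix functions. First I would dispatch rationality: $R$ is rational as an $L^\iy$ combination of products of rational $H^\iy$ matrix functions; $\Phi$ is rational by the cited spectral factorization result for rational positive $R$; the space $\sM_{\Phi}$ is finite dimensional, since $\sN_{\Phi}=\overline{\im H_G+\im H_K}$ is finite dimensional and $T_\Phi^*$ is injective; hence, by the Beurling--Lax--Halmos theorem together with Theorem~4.3.2 of \cite{FB10}, $\tht$ is rational and even two-sided inner; and finally $F=\Phi^*\tht$ is rational.

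Next I would establish the two Hankel identities stated at the end of the proposition, from which all the degree equalities fall out. Because $\tht$ is two-sided inner, $\tht\tht^*\equiv I_r$, so the second identity in \eqref{UsefulIDs} applied with $U=V=\tht$ gives
\[
I=T_{\tht\tht^*}=T_\tht T_\tht^*+H_\tht H_\tht^*.
\]
Since $\tht$ is inner, $T_\tht$ is an isometry, so $T_\tht T_\tht^*$ is the orthogonal projection onto $\im T_\tht=(\ker T_\tht^*)^\perp=\sM_{\Phi}^\perp$, i.e.\ $T_\tht T_\tht^*=I-P_{\sM_{\Phi}}$, and therefore $H_\tht H_\tht^*=P_{\sM_{\Phi}}$. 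For the $F$-identity, the third identity in \eqref{UsefulIDs} applied with $V=\Phi$, $W=\tht$ yields $H_F=H_{\Phi^*\tht,+}=T_\Phi^* H_\tht$, so that
\[
H_F H_F^*=T_\Phi^* H_\tht H_\tht^* T_\Phi=T_\Phi^* P_{\sM_{\Phi}} T_\Phi.
\]

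With the two identities in hand, the degree chain is a short rank computation. Using $\degM(V)=\rank H_V$ for $V\in\fR H^\iy$, I would write $\degM(\tht)=\rank H_\tht=\rank(H_\tht H_\tht^*)=\rank P_{\sM_{\Phi}}=\dim\sM_{\Phi}$, and $\degM(F)=\rank H_F=\rank(T_\Phi^* H_\tht)=\rank H_\tht$ since $T_\Phi^*$ is injective. The inequality $\degM(\Phi)\leq\degM(\tht)$ is immediate from Corollary~\ref{C:ImHPhiIncl}: $\im H_\Phi\subset\sM_{\Phi}$ gives $\rank H_\Phi\leq\dim\sM_{\Phi}$. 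Finally, for $\half\degM(R)=\degM(\Phi)$ I would invoke the formula $\degM(R)=\rank H_{R,+}+\rank H_{R,-}$ for rational $L^\iy$-matrix functions; self-adjointness of $R$ combined with $H_{R,+}^*=H_{R^*,-}=H_{R,-}$ gives $\rank H_{R,-}=\rank H_{R,+}$, while $R=\Phi^*\Phi$ together with the third identity in \eqref{UsefulIDs} gives $H_{R,+}=T_\Phi^* H_\Phi$, whose rank equals $\rank H_\Phi=\degM(\Phi)$ by injectivity of $T_\Phi^*$.

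I do not anticipate a serious obstacle: all of this is bookkeeping on top of \eqref{UsefulIDs}, the Beurling--Lax--Halmos theorem, and Corollary~\ref{C:ImHPhiIncl}. The one subtlety worth flagging is that the identity $H_\tht H_\tht^*=P_{\sM_{\Phi}}$ genuinely requires two-sided innerness of $\tht$; one-sided innerness only gives $T_\tht^* T_\tht=I$, which says nothing about $H_\tht H_\tht^*$. This is precisely the step where the finite dimensionality of $\sM_{\Phi}$---forced by the rational data hypothesis---gets used in an essential way.
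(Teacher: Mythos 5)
Your proposal is correct and follows essentially the same route as the paper's own proof: the identities $H_\tht H_\tht^*=P_{\sM_{\Phi}}$ and $H_F=T_\Phi^*H_\tht$ via \eqref{UsefulIDs}, the inclusion $\im H_\Phi\subset\sM_{\Phi}$ from Corollary~\ref{C:ImHPhiIncl}, and the factorization $H_{R,+}=T_\Phi^*H_\Phi$ combined with injectivity of $T_\Phi^*$ to get $\degM(R)=2\degM(\Phi)$. Your closing remark that two-sided innerness of $\tht$ is essential for $H_\tht H_\tht^*=P_{\sM_{\Phi}}$ is exactly the point the paper leans on as well.
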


\begin{proof}[\bf Proof.]
Corollary \ref{C:ImHPhiIncl} implies $\im H_\Phi\subset{\sM_{\Phi}}$. Hence
\[
\degM(\Phi)=\rank(H_\Phi)=\dim \im H_\Phi\leq \dim{\sM_{\Phi}}.
\]
Moreover, we have $H_{R,+}=H_{\Phi^*\Phi,+}=T_{\Phi}^* H_\Phi$, by the third identity in \eqref{UsefulIDs} applied to $V^*W=\Phi^*\Phi$. Since $\Phi$ is outer, $\kr T_{\Phi}^*=\{0\}$, and therefore $\rank H_{R,+}=\rank H_\Phi=\degM(\Phi)$. By $T_R\geq 0$, we have $H_{R,-}=H_{R,+}^*$. In particular, $\rank H_{R,-}=\rank H_{R,+}^*=\rank H_{R,+}$, and thus $\degM(R)=2 \rank H_{R,+}=2\degM(\Phi)$.

The fact that $\tht$ is inner with ${\sM_{\Phi}}=\kr T_\tht^*$ implies $T_\tht T_\tht^* =I-P_{\sM_{\Phi}}$. Since $\tht$ is two-sided inner, we have $\tht \tht^*=\tht^*\tht=I_r$, hence $T_{\tht\tht^*}=I$. Now apply the second identity of \eqref{UsefulIDs}. This yields
\[
H_\tht H_\tht^*=T_{\tht \tht^*}-T_\tht T_\tht^*=I-T_\tht T_\tht^*=P_{\sM_{\Phi}}.
\]
Hence
\[
\degM(\tht)=\rank H_\tht=\rank(H_\tht H_\tht^*)=\rank P_{\sM_{\Phi}}=\dim{\sM_{\Phi}}.
\]

Recall that $F=\Phi^*\tht$. Hence, by the third identity of \eqref{UsefulIDs}, we obtain that $H_F=H_{\Phi^*\tht}=T_{\Phi}^*H_\tht$. Since $\Phi$ is outer, we have $\kr T_{\Phi}^*=\{0\}$, which implies $\degM(F)=\rank H_F=\rank(T_{\Phi}^* H_\tht)=\rank H_\tht=\degM(\tht)$. Finally, $H_F=T_{\Phi}^*H_\tht$ together with $H_\tht H_\tht^*=P_{\sM_{\Phi}}$ implies $H_FH_F^*=T_{\Phi}^*P_{\sM_{\Phi}} T_\Phi$.
\end{proof}

Note that $\dim (\overline{\im H_G + \im H_K})\leq \d(G)+\d(K)<\iy$, since $G\in \fR H^\iy_{m\ts p}$ and $K\in\fR H^\iy_{m\ts q}$. Hence, replacing $K$ by $\wtil{K}=[\, K\ F\,]$, reduces the original Leech equation \eqref{Leech1} to one where
\begin{equation}
\label{rankcond}
\rank (T_GT_G^*-T_KT_K^*)<\iy.
\end{equation}

We will next focus on the case of the Leech equation where the rank constraint \eqref{rankcond} holds. The following theorem provides necessary and sufficient conditions for \eqref{rankcond} to hold.

\begin{thm}\label{T:rational}
Let $G\in\fR H^\iy_{m\ts p}$ and $K\in\fR H^\iy_{m\ts q}$ with $T_GT_G^*-T_KT_K^*\geq0$. Define $R\in \fR L^\iy_{m\ts m}$ by \eqref{defR}. Then the following statements are equivalent:
\begin{itemize}
\item[\textup{(i)}] $\rank (T_GT_G^*-T_KT_K^*)<\iy$;
\item[\textup{(ii)}] $T_R=0$;
\item[\textup{(iii)}] $G(e^{it})G(e^{it})^*=K(e^{it})K(e^{it})^*\quad (t\in[0,2\pi])$.
\end{itemize}
Moreover, in this case
\begin{equation}\label{TvsH2}
T_GT_G^*-T_KT_K^*=H_KH_K^*-H_GH_G^*
\end{equation}
and
\begin{equation}\label{RankBound}
\degM(G)\leq\degM(K),\quad
\degM(K)-\degM(G)\leq \rank(T_GT_G^*-T_KT_K^*)\leq \degM(K).
\end{equation}
Here $\degM(G)$ and $\degM(K)$ denote the McMillan degrees of $G$ and $K$, respectively.
\end{thm}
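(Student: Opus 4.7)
The plan is to prove the three equivalences from one master identity and then derive the quantitative claims. Applying the relation $T_{UV^*}=T_UT_V^*+H_UH_V^*$ from \eqref{UsefulIDs} with $U=V=G$ and with $U=V=K$, and subtracting, yields
\begin{equation*}
T_R=(T_GT_G^*-T_KT_K^*)+(H_GH_G^*-H_KH_K^*),
\end{equation*}
which is exactly the identity \eqref{hank44}. Since $G$ and $K$ are rational, $\rank H_G=\degM(G)$ and $\rank H_K=\degM(K)$ are both finite, so $T_R$ and $T_GT_G^*-T_KT_K^*$ differ by a finite-rank operator.

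The equivalence (ii)$\Leftrightarrow$(iii) is Fourier uniqueness: the block entries of $T_R$ are precisely the Fourier coefficients of $R$, so $T_R=0$ forces $R=0$ almost everywhere on $\BT$, and since $R\in\fR L^\iy_{m\ts m}$ has no poles on $\BT$ it is continuous there, upgrading the equality to pointwise on $[0,2\pi]$. The implication (ii)$\Rightarrow$(i) is then immediate: setting $T_R=0$ in the master identity gives precisely \eqref{TvsH2}, whose right-hand side has rank at most $\degM(G)+\degM(K)<\iy$.

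The main obstacle is (i)$\Rightarrow$(ii). From the master identity, assumption (i) together with the finite rank of the two Hankel terms forces $T_R$ to have finite rank, hence to be compact. I would then invoke the classical Brown--Halmos type fact that a compact Toeplitz operator with $L^\iy$ symbol must vanish. This can be deduced directly from the Toeplitz relation $T_R=S_m^{*k}T_RS_m^k$ valid for every $k\geq 0$: the isometries $S_m^k$ tend to zero weakly as $k\to\iy$, so by compactness $T_RS_m^k\to 0$ in norm, and contractivity of $S_m^{*k}$ then forces $T_R=0$.

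Once (ii) is established, identity \eqref{TvsH2} is already in hand. For the bounds \eqref{RankBound}, positivity $H_KH_K^*\geq H_GH_G^*\geq 0$ gives $\ker H_KH_K^*\subseteq\ker H_GH_G^*$, hence $\degM(G)=\rank H_G=\rank H_GH_G^*\leq\rank H_KH_K^*=\degM(K)$. Taking orthogonal complements of these kernels yields $\overline{\im H_GH_G^*}\subseteq\overline{\im H_KH_K^*}$, whence $\im(H_KH_K^*-H_GH_G^*)\subseteq\overline{\im H_KH_K^*}$, proving the upper bound $\rank(T_GT_G^*-T_KT_K^*)\leq\degM(K)$. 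The lower bound follows from subadditivity of rank applied to $H_KH_K^*=(H_KH_K^*-H_GH_G^*)+H_GH_G^*$.
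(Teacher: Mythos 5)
Your proposal is correct and follows essentially the same route as the paper: the identity \eqref{hank44} is the pivot for all three equivalences, \eqref{TvsH2} drops out by setting $T_R=0$, and the bounds \eqref{RankBound} come from the same three comparisons of positive finite-rank operators (monotonicity of rank under the positive ordering for the first two, subadditivity for the last). The one place where you genuinely diverge is (i)$\Rightarrow$(ii): the paper uses that $R$ is rational without poles on $\BT$, hence continuous there, and simply asserts that a finite-rank Toeplitz operator with continuous symbol must have identically vanishing symbol; you instead prove the more general Brown--Halmos statement for arbitrary $L^\iy$ symbols from the relation $T_R=S_m^{*k}T_RS_m^k$, which makes that step self-contained and independent of the rationality of $R$ (continuity is then only needed, exactly as you use it, to upgrade $R=0$ a.e.\ to the pointwise statement (iii)). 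One small caution there: the inference ``$S_m^k\to 0$ weakly and $T_R$ compact, hence $T_RS_m^k\to 0$ in norm'' is not valid for an arbitrary weakly null uniformly bounded sequence in place of $S_m^k$ (replace $S_m^k$ by $S_m^{*k}$ and take $T_R$ a rank-one projection to see this); what actually saves the argument is that $S_m^{*k}\to 0$ \emph{strongly}, so that $\|T_RS_m^k\|=\|S_m^{*k}T_R^*\|\to 0$ by compactness (indeed finite rank) of $T_R^*$ --- it is worth phrasing the step in that form. Finally, your kernel-inclusion proof that $Z\geq Y\geq 0$ implies $\rank Y\leq \rank Z$ replaces the paper's appeal to Douglas' factorization lemma; both are equally valid.
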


\begin{proof}[\bf Proof.]
Note that (iii) is equivalent to $R(e^{it})=0$ for each $t\in [0,2\pi]$, hence to $T_R=0$, since $R\in \fR L^\iy_{m\ts m}$. Thus (ii) $\Leftrightarrow$ (iii).

The fact that $G$ and $K$ are rational matrix $H^\iy$-functions implies that $H_G$ and $H_K$ have finite rank, and thus $\rank(H_GH_G^*-H_KH_K^*)<\iy$. From formula \eqref{hank44} it then follows that (i) holds if and only if $\rank T_R<\iy$. However, $R$ is a rational matrix function with no poles of the circle, and   thus continuous on the circle. This implies that $\rank T_R<\iy$ holds if and only if $R(e^{it})=0$ for all $t\in [0,2\pi]$, and thus $T_R=0$. Hence (i) $\Leftrightarrow$ (ii).

The combination of $T_R=0$ and formula \eqref{hank44} gives \eqref{TvsH2}.

Note that for any positive Hilbert space operators $Z$ and $Y$ on $\sV$, the inequality $Z\geq Y$ implies $\rank Z\geq \rank Y$. Indeed, by Douglas' Factorization Lemma there exists a contraction $Q$ on $\sV$ such that $Y^\half=Q Z^\half$. Hence
\[
\rank Y=\rank Y^\half=\rank (Q Z^\half)\leq \rank (Z^\half)=\rank(Z).
\]
Applying this inequality with $Z=H_KH_K^*$ and $Y=H_GH_G^*$ and noting that $Z-Y=H_KH_K^*-H_GH_G^*=T_GT_G^*-T_KT_K^*\geq 0$, we obtain
\[
\d(K)=\rank (H_KH_K^*)
=\rank(Z)\geq\rank(Y)=\rank(H_GH_G)=\d(G).
\]
If we take $Z=H_KH_K^*$ and $Y=H_KH_K^*-H_GH_G^*=T_GT_G^*-T_KT_K^*$, then clearly $Z\geq Y$, and thus
\[
\d(K)=\rank(H_KH_K^*)=\rank(Z)\geq \rank (Y)=\rank (T_GT_G^*-T_KT_K^*).
\]
In addition to $Z$ and $Y$, set $V=H_GH_G^*$. Then $Z=Y+V$ implies
\[
\rank(Z)=\rank(Y+V)\leq \rank(Y)+\rank(V).
\]
Since $\d(G)=\rank(V)$ and $\d(K)=\rank(Z)$, the last part of \eqref{RankBound} holds.
\end{proof}

%\begin{lem}\label{L:RankCons}
%Let $X,Z:\sU\to\sU$ be Hilbert space operators such that $Z\geq X\geq 0$. Then $\rank X\leq \rank Z$ and
%\begin{equation}\label{RankInEq}
%\rank Z -\rank X\leq \rank (Z-X)\leq \rank Z.
%\end{equation}
%\end{lem}
%
%\begin{proof}[\bf Proof.]
%Since $Z=(Z-X)+X$ and $Z-X$ and $X$ are positive, we have $\rank Z\leq\rank (Z-X)+\rank X$. Hence the first inequality in \eqref{RankInEq} holds.
%
%Set $\sU_2=\kr Z$ and $\sU_1=\sU\ominus\sU_2$. Then $\rank Z=\dim\sU_1$, and with respect to the decomposition $\sU=\sU_1\oplus\sU_2$, the operators $Z$ and $X$ have operator matrix decompositions of the form
%\[
%Z=\mat{cc}{Z_1&0\\0&0}\mbox{ on }\mat{c}{\sU_1\\\sU_2},\quad
%X=\mat{cc}{X_1&X_2^*\\ X_2&X_3}\mbox{ on }\mat{c}{\sU_1\\\sU_2},
%\]
%with $X_1$ and $X_3$ positive. Hence the right lower corner of $Z-X$ equals $-X_3$. Since $Z-X\geq 0$, we have $X_3=0$. Next observe that $X\geq 0$ together with $X_3=0$ implies $X_2=0$.  Thus
%\[
%\rank X =\rank X_1\leq \dim \sU_1,\quad
%\rank(Z-X)=\rank(Z_1-X_1) \leq \dim \sU_1.
%\]
%Our claim follows since $\dim \sU_1=\rank Z$.
%\end{proof}

%%%%%%%%%%%%
\begin{remark}
If the matrix $H^\iy$-functions $G$ and $K$ are continuous, then the first part of Theorem \ref{T:rational} goes through in a slightly altered form. One only has to replace (i) by: $T_GT_G^*-T_KT_K^*$ is compact. The argumentation is similar to the one given in the proof of Theorem \ref{T:rational}, where we now use that $H_G$ and $H_K$ are compact, since $G$ and $K$ are continuous, and that $R$ being continuous together with $T_R$ compact implies $T_R=0$, and hence $R=0$.
\end{remark}

How restrictive condition \eqref{rankcond} can be becomes evident when considering the Toeplitz corona problem.

\begin{cor}\label{C:corona1}
Let $G\in \fR H^\iy_{m\ts p}$ such that  $T_GT_G^*\geq I$, i.e., \eqref{poscond1} holds with $K(z)=I_m$ for each $z\in\BD$. Then $\rank (T_GT_G^*-I)<\iy$ holds if and only if $G$ is a constant matrix function whose value is a co-isometry.
\end{cor}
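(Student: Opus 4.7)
The plan is to specialize Theorem \ref{T:rational} to the corona setting (so that $K(z) = I_m$) and then exploit the identity $T_{GG^*} = T_G T_G^* + H_G H_G^*$ from \eqref{UsefulIDs} to conclude that the Hankel operator of $G$ must vanish. The converse direction is a trivial direct verification, so the real content lies in the forward direction.

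First I would observe that with $K(z) \equiv I_m$ we have $K \in \fR H^\iy_{m \ts m}$, $T_K = I$, $H_K = 0$, and $\degM(K) = 0$. The hypothesis $T_GT_G^* \geq I$ is exactly \eqref{poscond1} in this setting, so Theorem \ref{T:rational} applies. The equivalence (i) $\Leftrightarrow$ (iii) of that theorem then tells me that $\rank(T_GT_G^* - I) < \iy$ if and only if
\[
G(e^{it})G(e^{it})^* = I_m \qquad (t \in [0,2\pi]).
\]

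Next I would translate this boundary identity back into an operator identity. Applying the second identity of \eqref{UsefulIDs} with $U = V = G$ gives $T_{GG^*} = T_G T_G^* + H_G H_G^*$. Since $GG^* \equiv I_m$ on $\BT$, the left-hand side equals $I$, so
\[
I = T_G T_G^* + H_G H_G^*.
\]
Combining this with the standing hypothesis $T_G T_G^* \geq I$ forces $H_G H_G^* \leq 0$, while $H_G H_G^* \geq 0$ is automatic. Hence $H_G H_G^* = 0$, i.e., $H_G = 0$. Reading off the block entries of $H_G$ from the preliminaries section, $H_G = 0$ means that all Taylor coefficients $G_k$ for $k \geq 1$ vanish, so $G(z) \equiv G_0$ is constant. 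The boundary identity $GG^* = I_m$ then reduces to $G_0 G_0^* = I_m$, showing $G_0$ is a co-isometry.

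For the converse, if $G(z) \equiv G_0$ with $G_0 G_0^* = I_m$, then $T_G$ acts as the block-diagonal multiplication by $G_0$, so $T_G T_G^* = I$ and $T_GT_G^* - I = 0$ has (finite) rank zero. I do not foresee a real obstacle: the only slightly delicate point is the passage from $H_G = 0$ to $G$ being constant, but this is immediate from the explicit block form of $H_G$ displayed in the preliminaries.
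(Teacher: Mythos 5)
Your proposal is correct and follows essentially the same route as the paper: both directions invoke Theorem \ref{T:rational} to get $GG^*=I_m$ on the circle, then use the identity $T_{GG^*}=T_GT_G^*+H_GH_G^*$ from \eqref{UsefulIDs} together with $T_GT_G^*\geq I$ to force $H_G=0$ and hence constancy of $G$. No gaps.
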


\begin{proof}[\bf Proof.]
Clearly if $G$ is a constant matrix function whose value is a co-isometry, then $T_GT_G^*=I$, and hence $T_GT_G^*-I$ has finite rank.

Conversely, assume $T_GT_G^*-I$ has finite rank. By Theorem \ref{T:rational},  $R=GG^*-I_m=0$. Thus $GG^*=I_m$. In particular, the values of $G$ are co-isometries. By the second identity in \eqref{UsefulIDs} we have $I_{\ell^2_+(\BC^m)}=T_{GG^*}=T_GT_G^*+H_GH_G^*$. Thus $-H_GH_G^*=T_GT_G^*-I\geq0$. This can only occur if $H_G=0$, i.e., if $G$ is constant matrix function.
\end{proof}

%%%%%%%%%%%%
\begin{cor}\label{C:leftfact}
Let $G\in\fR H^\iy_{m\ts p}$ and $K\in\fR H^\iy_{m\ts q}$ with $T_GT_G^*-T_KT_K^*\geq0$. Define $R$, $\Phi$, $\tht$, and $F$ as in Section \ref{S:NewApproach}. Then
\begin{equation}\label{factors}
\Phi^*\Phi=R=FF^*\ands \Phi=\tht F^*.
\end{equation}
Moreover, $T_R>0$ if and only $\Phi$ is invertible outer, that is, $r=m$ and $\Phi$ has an inverse in $H_{m\ts m}^\iy$. In this case $F$ is invertible in $L^\iy_{m\ts m}$ with an anti-analytic inverse.
\end{cor}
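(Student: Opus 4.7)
The plan is to dispatch the three factorization identities first by direct algebra, then to argue the equivalence $T_R>0\iff\Phi$ invertible outer via the multiplication properties of analytic Toeplitz operators, and finally to read off a closed form for $F^{-1}$.

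The identities in \eqref{factors} are essentially bookkeeping. The relation $\Phi^*\Phi=R$ is just the defining relation of the outer spectral factor (see \eqref{outs}). From Proposition \ref{propF} we have $F=\Phi^*\tht$, and hence $F^*=\tht^*\Phi$. Since $G$ and $K$ are rational, the discussion at the beginning of Section \ref{S:RKrat} established that $\tht$ is two-sided inner, i.e., $\tht\tht^*=\tht^*\tht=I_r$. A direct computation therefore gives $FF^*=\Phi^*\tht\tht^*\Phi=\Phi^*\Phi=R$ and $\tht F^*=\tht\tht^*\Phi=\Phi$.

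For the equivalence, the easy direction is: if $\Phi$ has an $H^\iy$ inverse, then $T_\Phi$ is invertible with inverse $T_{\Phi^{-1}}$, hence $T_R=T_\Phi^*T_\Phi$ is positive definite. For the converse, I would start from $T_R>0$ and observe that $\|T_\Phi x\|^2=\langle T_R x,x\rangle\geq c\|x\|^2$ for some $c>0$, so that $T_\Phi$ is bounded below; combined with the outerness $\ker T_\Phi^*=\{0\}$, this makes $T_\Phi$ boundedly invertible. The main obstacle I anticipate is extracting from this operator-theoretic invertibility the pointwise conclusions $r=m$ and $\Phi^{-1}\in H^\iy_{m\ts m}$. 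The plan is to note that $T_R>0$ on $\ell_+^2(\BC^m)$ forces the symbol inequality $R(e^{it})=\Phi(e^{it})^*\Phi(e^{it})\geq cI_m$ for a.e.\ $t$, so $\Phi(e^{it})$ has full column rank $m$ a.e., which forces $r\geq m$; together with the constructional bound $r\leq m$ this yields $r=m$. Once $\Phi$ is square, the standard criterion for invertibility of a square analytic Toeplitz operator (cf.\ \cite{BS90}) yields $\Phi^{-1}\in H^\iy_{m\ts m}$.

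Finally, with $r=m$, $\tht\tht^*=I_m$, and $\Phi^{-1}\in H^\iy_{m\ts m}$ in hand, the candidate inverse $(\Phi^{-1}\tht)^*=\tht^*(\Phi^*)^{-1}$ satisfies $F\cdot(\Phi^{-1}\tht)^*=\Phi^*\tht\tht^*(\Phi^*)^{-1}=I$ a.e.\ on $\BT$, and likewise on the other side. Since $\Phi^{-1}\tht\in H^\iy_{m\ts m}$, its pointwise adjoint $F^{-1}=(\Phi^{-1}\tht)^*$ lies in $L^\iy_{m\ts m}$ and is anti-analytic, as claimed.
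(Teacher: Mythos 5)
Your proposal is correct, and it overlaps with the paper's proof on the identity $\Phi^*\Phi=R$ (by definition of the outer spectral factor), on $\Phi=\tht F^*$ (via $\tht\tht^*=I_r$), and on the formula $F^{-1}=\tht^*(\Phi^{-1})^*$ for the anti-analytic inverse. You deviate in two places. First, for $FF^*=R$ you compute directly $FF^*=\Phi^*\tht\tht^*\Phi=\Phi^*\Phi=R$ using the two-sided innerness of $\tht$; the paper instead applies Theorem \ref{T:rational} to the augmented data $\wtil{K}=[\,K\ F\,]$ (whose hypothesis (i) is guaranteed by Theorem \ref{T:main}) to get $GG^*=KK^*+FF^*$. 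Your route is shorter and arguably cleaner, since the two-sided innerness of $\tht$ is needed anyway for $\Phi=\tht F^*$; the paper's route has the cosmetic advantage of exhibiting \eqref{factors} as an instance of the boundary identity (iii) of Theorem \ref{T:rational}. Second, for the equivalence $T_R>0\iff\Phi$ invertible outer, the paper simply cites \cite[Proposition 10.2.1]{FB10}, whereas you sketch a proof: $T_R>0$ makes $T_\Phi$ bounded below, hence (with dense range) invertible; $T_R>0$ forces $R\geq cI_m$ a.e.\ on $\BT$, so $\Phi(e^{it})$ has rank $m$ a.e., giving $r=m$; and an invertible square analytic Toeplitz operator has an analytic Toeplitz inverse, so $\Phi^{-1}\in H^\iy_{m\ts m}$. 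Each step of that sketch is a standard fact (the passage from $T_R\geq cI$ to the essential lower bound on the symbol uses reproducing kernels or a localization argument), so your argument is sound, just more self-contained than the paper's one-line citation. No gaps.
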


The first two identities in \eqref{factors} say that $\Phi$ is a right and $F$ a left spectral factors of $R$. The last identity, together with $\tht$ two-sided inner, provides a Douglas-Shapiro-Shields factorization of $\Phi$, cf., \cite[Chapter 4]{FB10}.

\begin{proof}[\bf Proof of Corollary \ref{C:leftfact}.]
The identity $\Phi^*\Phi=R$ holds by definition of $\Phi$. Applying Theorem \ref{T:rational} with $K$ replaced by $\wtil{K}=[\,K\ F\,]$, where we note that condition (i) is satisfied by Theorem \ref{T:main}, yields
\[
GG^*=\wtil{K}\wtil{K}^*=KK^*+FF^*,\quad\mbox{i.e.}\quad
FF^*=GG^*-KK^*=R.
\]
Recall that $F$ is defined as $F=\Phi^*\tht$. Hence $F^*=\tht^*\Phi$. Since $\tht$ is two-sided inner, $\tht\tht^*$ is identically equal to $I_r$. Hence $\Phi=\tht F^*$.

It is well known that $T_R>0$ holds if and only if its outer spectral factor is invertible outer, c.f., \cite[Proposition 10.2.1]{FB10}. Assume $T_R>0$. Then $\Phi$ and $\tht$ are invertible with $\Phi^{-1}\in H^\iy_{m\ts m}$ and $\tht^{-1}=\tht^*$. This shows that $F=\Phi^*\tht$ is invertible in $L^\iy_{m\ts m}$, with inverse $(\Phi^*\tht)^{-1}=\tht^*(\Phi^{*})^{-1}=\tht^*(\Phi^{-1})^{*}$. Since $\tht^*$ and $(\Phi^{-1})^{*}$ are both anti-analytic, so is $F^{-1}$.
\end{proof}

\begin{proof}[\bf Proof of Theorem \ref{T:mainrat}.]
We observed at the beginning of the present section that $R$ admits an outer spectral factor and that $\tht$ is two-sided inner. The relations between the McMillan degrees of $R$, $\Phi$, $\tht$ and $F$ in \eqref{MMbounds} follow from Proposition \ref{P:degrees}. The identity \eqref{TvsH} follows by replacing $K$ in \eqref{TvsH2} by $\wtil{K}=[\, K\ F\,]$, noting that $\rank(T_GT_G^*-T_{\wtil{K}}T_{\wtil{K}}^*) =\rank(T_GT_G^*-T_KT_K^*-T_FT_F^*)<\iy$ by Theorem \ref{T:main}, and the identities
$T_{\wtil{K}}T_{\wtil{K}}^*=T_KT_K^*+T_FT_F^*$ and $H_{\wtil{K}}H_{\wtil{K}}^*=H_KT_K^*+H_FT_F^*$.
\end{proof}

In case \eqref{rankcond} holds, the following proposition shows how the partial isometry $M_\circ$ in \eqref{contracon2intro} can be computed.

%%%%%%%%%%%%
\begin{prop}\label{P:integralform}
Let $G\in\fR H^\iy_{m\ts p}$ and $K\in\fR H^\iy_{m\ts q}$ such that \eqref{rankcond} holds. Let $\nu=\rank(T_GT_G^*-T_KT_K^*)<\iy$. Then $\nu\leq \degM(K)$, the space $\sH_0$ in \eqref{defMintro} can be taken to be $\BC^\nu$, and in that case the partial isometry $M_\circ$ in  \eqref{defMintro} and \eqref{contracon2intro} can be computed via $M_\circ=M_1^+ M_*$ with
\begin{align*}
M_1=
\frac{1}{2\pi} \int_0^{2\pi} V(e^{i \omega})^*V(e^{i \omega}) d \omega, \quad&\quad
M_* =
\frac{1}{2\pi} \int_0^{2\pi} V(e^{i \omega})^*W(e^{i \omega}) d \omega,\\
 V(e^{it})=\mat{cc}{e^{it}\hat\la_\circ(e^{it})&G(e^{it})}, \ &\
W(e^{it})=\mat{cc}{\hat\la_\circ(e^{it})&K(e^{it})},\quad \mbox{a.e.}
\end{align*}
and $M_1^+$ the Moore-Penrose pseudo-inverse of $M_1$.
\end{prop}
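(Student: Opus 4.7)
The plan is to verify the three assertions in sequence. First, the bound $\nu\leq \degM(K)$ follows directly from Theorem \ref{T:rational}: under \eqref{rankcond} the identity \eqref{TvsH2} gives $T_GT_G^*-T_KT_K^* = H_KH_K^* - H_GH_G^*$, and the second inequality in \eqref{RankBound} reads exactly $\nu\leq \degM(K)$. The choice $\sH_\circ=\BC^\nu$ is then forced by the Kolmogorov factorization $T_GT_G^*-T_KT_K^* = \la_\circ\la_\circ^*$ with $\kr \la_\circ=\{0\}$, since this requires $\dim\sH_\circ = \rank \la_\circ = \nu$.

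For the integral formula, the first step is to pass \eqref{contracon2intro} to the boundary. Since $\la_\circ:\BC^\nu\to\ell^2_+(\BC^m)$ is a bounded operator with finite-dimensional domain, Parseval yields $\|\hat\la_\circ h\|_{H^2_m}=\|\la_\circ h\|_{\ell^2}$ for each $h\in\BC^\nu$, so the columns of $\hat\la_\circ$ lie in $H^2_m$ and $V$, $W$ have well-defined $L^2$ boundary values on $\BT$. Thus \eqref{contracon2intro} extends to $V(e^{it})M_\circ = W(e^{it})$ for almost every $t\in[0,2\pi]$. Left-multiplication by $V(e^{it})^*$, integration over $\BT$, and pulling the constant matrix $M_\circ$ out of the integral produces the normal-equation-type identity $M_1 M_\circ = M_*$. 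Applying $M_1^+$ on the left then gives $P_{\ra M_1}\,M_\circ = M_1^+ M_*$, so the claimed formula will follow once one shows $\ra M_\circ\subseteq \ra M_1$.

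The remaining inclusion is established by inspecting the lurking-isometry construction of $M_\circ$. Rewriting \eqref{fundid1} as $V(z)V(w)^* = W(z)W(w)^*$, one defines the partial isometry $M_\circ^*:\sH_\circ\oplus\BC^p\to\sH_\circ\oplus\BC^q$ by $M_\circ^* V(w)^* u = W(w)^* u$ on the linear span of $\{V(w)^* u:w\in\BD,\, u\in\BC^m\}$ (well defined and isometric by the preceding identity), extended by continuity to the closure and by zero on its orthogonal complement. Hence $\ra M_\circ = (\kr M_\circ^*)^\perp = \overline{\hull}\{V(w)^* u\}$. On the other hand, $y\in \kr M_1$ iff $\int_0^{2\pi}\|V(e^{it})y\|^2\,dt = 0$ iff $V(e^{it})y = 0$ a.e.; since the columns of $Vy$ lie in $H^2$, the latter is equivalent to $V(z)y = 0$ for all $z\in\BD$, i.e., $y\perp V(w)^* u$ for every admissible $w$ and $u$. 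Thus $\kr M_1 = \bigl(\overline{\hull}\{V(w)^* u\}\bigr)^\perp$, giving $\ra M_1 = (\kr M_1)^\perp = \overline{\hull}\{V(w)^* u\} = \ra M_\circ$. In particular $P_{\ra M_1}M_\circ = M_\circ$, and $M_\circ = M_1^+M_*$ as required.

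The main obstacle is this last range identification, which requires passing freely between interior and boundary values of $\hat\la_\circ$ (which in general is only in $H^2$, not $H^\infty$) and matching the subspace cut out by the lurking-isometry construction with the range of the finite matrix $M_1$. Everything else reduces to a combination of Theorem \ref{T:rational}, Parseval, and the standard identity $M_1^+M_1 = P_{\ra M_1}$ for the Moore-Penrose pseudo-inverse.
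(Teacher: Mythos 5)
Your argument is correct, and it arrives at the formula by a route that is organized differently from the paper's, although the underlying algebra is the same. The paper disposes of the formula in two lines by invoking Lemma \ref{lemrep} from the appendix: there one stacks the Taylor coefficients of $V$ and $W$ into operators $\Omega_1$ and $\Omega_2$ mapping into $\ell^2_+(\BC^m)$, deduces $\Omega_1\Omega_1^*=\Omega_2\Omega_2^*$ from \eqref{fundid1} via the point evaluation operators, obtains $M_\circ$ as the Douglas partial isometry solving $\Omega_1 M_\circ=\Omega_2$, and then computes $M_\circ=\Omega_1^+\Omega_2=(\Omega_1^*\Omega_1)^+\Omega_1^*\Omega_2=M_1^+M_*$, with Parseval identifying $\Omega_1^*\Omega_1$ and $\Omega_1^*\Omega_2$ with the integrals $M_1$ and $M_*$. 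You instead take $M_\circ$ as already produced by the lurking isometry, pass $V(z)M_\circ=W(z)$ to the boundary, form the normal equation $M_1M_\circ=M_*$, and then check the range condition $\ran M_\circ\subseteq\ran M_1$ by computing $\kr M_1$ through the $H^2$ boundary uniqueness theorem. The range bookkeeping you do by hand --- final space of $M_\circ$ equal to $\overline{\hull}\{V(w)^*u\}$ versus $\ran M_1=\ran\Omega_1^*$ --- is exactly what the paper absorbs into the identity $\Omega_1^+\Omega_1=P_{\ran \Omega_1^*}$; and your normalization of $M_\circ$ (the canonical partial isometry, not an arbitrary contraction satisfying \eqref{contracon2intro}) matches the one Lemma \ref{lemrep} produces, which is what the proposition intends. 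Your version is self-contained but takes a boundary-value detour that Parseval renders avoidable, since the integrals equal $\sum_j V_j^*V_j$ and $\sum_j V_j^*W_j$ directly; the paper's version isolates the abstract content in a reusable lemma. The remaining assertions ($\nu\leq\degM(K)$ via \eqref{RankBound} and $\sH_\circ=\BC^\nu$ from the Kolmogorov factorization) are handled exactly as in the paper.
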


\begin{proof}[\bf Proof.]
Recall from the introduction that  $\dim\sH_\circ=\rank(T_GT_G^*-T_KT_K^*)=\nu$. Since $\nu<\iy$, we can apply a linear transformation identifying $\sH_\circ$ with $\BC^\nu$, and since $\sH_\circ$ comes from the factorization of $T_GT_G^*-T_KT_K^*$, we can just as well apply this transformation and take $\sH_0$ to be $\BC^\nu$. The bound on $\nu$ is a direct consequence of \eqref{RankBound}.

The formula for $M_\circ$ follows by applying Lemma \ref{lemrep} with the given choice of $V$ and $W$. Note that the identity \eqref{MNcondition} follows from \eqref{fundid1}. The square summability of the Taylor coefficients of $V$ and $W$ follows from the boundedness of $T_G E_p$, $T_K E_q$ and $\la_\circ$ (as defined in the introduction), as operators mapping into $\ell^2_+(\BC^m)$. Hence all conditions are satisfied, and Lemma \ref{lemrep} applies.
\end{proof}

We conclude this section with two examples.

\begin{example}
According to Proposition \ref{propF}, if $H_GH_G^*-H_KH_K^*\geq0$, then the upper bound on the rank of $T_GT_G^*-T_KT_K^*-T_FT_F^*$ in item (i) can be improved to $\dim\sM$, with $\sM$ as defined in Lemma \ref{lemNM}. This improvement can be arbitrarily large. Let $l$ be a positive integer, take for $G$ any rational function of McMillan degree $l$ and take $K=G$. Clearly $R=GG^*-KK^*=0$, thus $\Phi=0$, which implies $\sM=\{0\}$. Hence $\dim\sM=0$; a solution with McMillan degree 0 is obviously $X(z)=1$, $z\in\BC$.      On the other hand $\dim(\overline{\im H_G + \im H_K })=\dim(\im H_G)=\degM(G)=l$. Hence we have an improvement of $l$.
\end{example}

\begin{example}\label{E:poly}
Let $G$ and $K$ are matrix polynomials whose values are matrices of size $m\ts p$, respectively $m\ts q$, say with degrees $d_1$, respectively $d_2$. Assume that the last coefficients of $G$ and $K$, i.e, corresponding to $z^{d_1}$ and $z^{d_2}$, have full rank and that $p,q\geq m$. This implies that the last coefficients of $G$ and $K$ admit a right inverse. Note that $H_G$ and $H_K$ only have entries on the first $d_1$, respectively $d_2$, anti-diagonals, starting in the left upper corner. Since the last coefficients of $G$ and $K$ admit a left inverse, it follows that  $\degM(G)=\rank H_G=m\cdot d_1$ and $\degM(K)=\rank H_K=m\cdot d_2$. Now also assume that $T_GT_G^*-T_KT_K^*\geq 0$. Applying Theorem \ref{T:main}, and following the subsequent procedure we obtain that there exists a rational matrix solution $X$ to \eqref{Leech1}. The McMillan degree of $X$ is bounded by $\degM(G)+\degM(K)=m(d_1+d_2)$. However, in this case the rank constraint in item (ii) of Theorem \ref{T:main} gives a much sharper bound, namely $\dim(\im H_G + \im H_K)\leq m\, \max\{d_1,d_2\}$, due to the specific structure of $H_G$ and $H_K$. Note that this bound is  in line with \cite{Trent13} (where the factor $m$ does not appear, but should be there).
\end{example}

\setcounter{equation}{0}
\section{Appendix}\label{S:appendix}

In this appendix we prove two results of a general operator theoretical nature that are used in the paper.

\begin{lem} \label{lempos}
Let  $\sV=\sV_1\oplus \sV_2$ and  $\sW=\sW_1 \oplus \sW_2 $ be  Hilbert space direct sums, and  let $X:\sV\to \sV$ and $Y:\sW\to \sV$ be operators. Assume that  $X$   is selfadjoint and $X \sV\subset \sV_1$, and that $\sW_1=Y^{-1}[\sV_1]$, i.e.,  $\sW_1$ is the inverse image of  $\sV_1$ under $Y$.
Finally, let $P_{\sW_1}$ be the orthogonal projection of  $\sW$ onto $\sW_1$. Then
\begin{equation}
\label{pos2M}
YY^*-X \geq 0\  \Longleftrightarrow\  YP_{\sW_1} Y^*-X \geq 0.
\end{equation}
Moreover, $\rank (YP_{\sW_1} Y^*-X)\leq \dim \sV_1$. Assume $YY^*-X \geq 0$ and
in addition that $Y$ is injective and $X\geq 0$. Then $\rank(YP_{\sW_1} Y^*)=\dim\sW_1$, $\rank X\leq \dim \sW_1$ and $\rank (YP_{\sW_1} Y^*-X)\leq \dim \sW_1$.
\end{lem}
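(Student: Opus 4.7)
The plan is to exploit a $2\times 2$ block decomposition coming from the relation $\sW_1=Y^{-1}[\sV_1]$. Writing $Y$ with respect to the decompositions $\sW=\sW_1\oplus\sW_2$ and $\sV=\sV_1\oplus\sV_2$, the inclusion $Y\sW_1\subset\sV_1$ forces the $(2,1)$-block $Y_{21}=0$, while taking $w=(0,w_2)$ and using $\sW_1=Y^{-1}[\sV_1]$ forces $Y_{22}$ to be injective. Selfadjointness of $X$ combined with $X\sV\subset\sV_1$ similarly yields $X=\sbm{X_{11} & 0\\ 0 & 0}$ with $X_{11}=X_{11}^*$.

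For the equivalence \eqref{pos2M}, the identity $I=P_{\sW_1}+P_{\sW_2}$ provides the decomposition
\[
YY^*-X=(YP_{\sW_1}Y^*-X)+YP_{\sW_2}Y^*,
\]
whose second summand is positive, so $(\Leftarrow)$ is immediate. For $(\Rightarrow)$, the block form yields, for every $v=v_1+v_2$,
\[
\lg(YP_{\sW_1}Y^*-X)v,v\rg=\|Y_{11}^*v_1\|^2-\lg X_{11}v_1,v_1\rg,
\]
which is independent of $v_2$, while
\[
\lg(YY^*-X)v,v\rg=\|Y_{11}^*v_1\|^2+\|Y_{12}^*v_1+Y_{22}^*v_2\|^2-\lg X_{11}v_1,v_1\rg.
\]
Injectivity of $Y_{22}$ gives $\overline{\im Y_{22}^*}=\sW_2$; fixing $v_1\in\sV_1$ and choosing $v_2\in\sW_2$ so that $\|Y_{12}^*v_1+Y_{22}^*v_2\|^2<\vp$, the hypothesis forces the desired inequality after letting $\vp\to0$.

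The bound $\rank(YP_{\sW_1}Y^*-X)\leq\dim\sV_1$ is then immediate from the block form, since the operator has range in $\sV_1$. For the sharper bounds under the additional hypotheses $Y$ injective and $X\geq 0$, I would apply Douglas' factorization lemma to $X=X^{\half}(X^{\half})^*\leq YY^*$ to obtain a contraction $C\colon\sV\to\sW$ with $X^{\half}=YC$; since $\overline{\im X^{\half}}=\overline{\im X}\subset\sV_1$ and $Y^{-1}[\sV_1]=\sW_1$, this forces $\im C\subset\sW_1$. Consequently $X=YCC^*Y^*$ and $YP_{\sW_1}Y^*$ both have range in $Y(\sW_1)$, reducing all three rank claims to the identity $\rank(YP_{\sW_1}Y^*)=\dim\sW_1$. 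The latter follows from the fact that $P_{\sW_1}Y^*YP_{\sW_1}$, regarded as a positive operator on $\sW_1$, is injective whenever $Y$ itself is injective, hence has dense range in $\sW_1$.

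The main technical subtlety is the forward direction of the equivalence: it is the only step that uses injectivity of $Y_{22}$ (rather than merely $Y_{21}=0$), and it requires the approximation afforded by $\overline{\im Y_{22}^*}=\sW_2$ in place of an explicit closed-form identity.
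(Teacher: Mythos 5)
Your proof is correct and follows essentially the same route as the paper: the same block decompositions forcing $Y_{21}=0$ and $Y_{22}$ injective, a density argument via $\overline{\im Y_{22}^*}=\sW_2$ for the forward implication (the paper packages this as a congruence with a dense-range third factor), and Douglas' factorization lemma plus range inclusions for the final rank bounds. The only slip is notational: in the approximation step the vector $v_2$ should range over $\sV_2$, not $\sW_2$, since $Y_{22}^*$ maps $\sV_2$ into $\sW_2$.
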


\begin{proof}[\bf Proof.]
Using the decompositions  $\sV=\sV_1\oplus \sV_2$ and  $\sW=\sW_1 \oplus \sW_2 $ we represent   $X$ and $Y$ as $2\ts 2$ operator matrices, as follows:
\begin{equation}
\label{partXY}
X=\begin{bmatrix}  X_1 &0 \\  0 &0 \end{bmatrix}:
\begin{bmatrix} \sV_1 \\ \sV_2  \end{bmatrix} \to
\begin{bmatrix} \sV_1 \\  \sV_2  \end{bmatrix}, \
Y=\begin{bmatrix}  Y_1 & Y_2 \\  0  & Y_3 \end{bmatrix}:
\begin{bmatrix} \sW_1 \\  \sW_2  \end{bmatrix} \to
\begin{bmatrix} \sV_1 \\  \sV_2 \end{bmatrix}.
\end{equation}
Note that the  zeros in the operator matrix for $X$ follow from the fact that $X$   is selfadjoint and $X \sV\subset \sV_1$.  The zero in the left lower corner of the operator matrix for $Y$ is a consequence of  $\sW_1=Y^{-1}[\sV_1]$. Indeed, the latter equality implies that $Y$ maps  $\sW_1$ into  $\sV_1$.  The identity $\sW_1=Y^{-1}[\sV_1]$ also  implies that $Y_3$ is one-to-one.  To see this, assume $Y_3u=0$ for some $u\in \sW_2$. Then $Yu\in \sV_1$. But the latter can only happen when $u\in Y^{-1}[\sV_1]=\sW_1$. Thus $u\in \sW_1\cap \sW_2$, and hence $u=0$. Therefore, $Y_3$ is one-to-one.

Next, observe that the partitionings in  \eqref{partXY} imply that
\begin{align}
&YY^*-X=  \begin{bmatrix}  I_{\sV_1} & Y_2 \\  0  & Y_3 \end{bmatrix}
\begin{bmatrix}  Y_1Y_1^*-X_1  & 0 \\  0  & I_{\sW_2} \end{bmatrix}
\begin{bmatrix}  I_{\sV_1} & 0 \\  Y_2^*  & Y_3^* \end{bmatrix}\mbox{ on }\mat{c}{\sV_1\\\sV_2}, \label{idpos1}\\[.2cm]
&\hspace{1.5cm} YP_{\sW_1} Y^*-X =  \begin{bmatrix}  Y_1Y_1^*-X_1 & 0 \\  0  & 0\end{bmatrix}\mbox{ on }\mat{c}{\sV_1\\\sV_2}.\label{idpos2}
\end{align}

Now assume that the inequality in the right hand side of \eqref{pos2M} holds. This implies that  the operator matrix  in the right hand side of  \eqref{idpos2} is positive. But then the same holds true for the operator defined by the second operator matrix in the right hand side of \eqref{idpos1}. The equality \eqref{idpos1} then  shows that $YY^*-X $ is a positive operator, and  the implication $\Longleftarrow$ in \eqref{pos2M} is proved.

To prove the reverse implication assume that $YY^*-X $ is a positive operator.
Since $Y_3$ is one-to-one, the operator $U$ from $\sV_1\oplus \sV_2$ to $\sV_1\oplus \sW_2$ defined by the third operator matrix in the right hand side of  \eqref{idpos1} has a dense range. Using \eqref{idpos2} and the positivity of $YY^*-X$, we see that
\[
\big\lg \begin{bmatrix}  Y_1Y_1^*-X_1 & 0 \\  0  & I_{\sW_2} \end{bmatrix}
Uv, Uv \,\big\rg\geq 0\quad  \mbox{for all $v\in \sV=\sV_1\oplus \sV_2$}.
\]
But the range of $U$ is dense. Hence, by continuity,  we get
\[
\big\lg \begin{bmatrix}  Y_1Y_1^*-X_1 & 0 \\  0  & I_{\sM^\perp} \end{bmatrix}y, y\,\big\rg\geq 0\quad  \mbox{for all $y\in \sV_1\oplus \sW_2$}.
\]
It follows that $ Y_1Y_1^*-X_1$ is positive, and by \eqref{idpos2}
the same holds true for the operator $YP_{\sW_1} Y^*-X $. This proves the implication $\Longrightarrow$ in \eqref{pos2M}.

The decomposition \eqref{idpos2} shows clearly that $\rank(YP_{\sW_1} Y^*-X)\leq\dim\sV_1$.

Note that if $Y$ is injective, we have $\rank(YP_{\sW_1}Y^*)=\rank(P_{\sW_1}) =\dim\sW_1$. Assuming $YY^*-X\geq0$, we have $YP_{\sW_1}Y^*\geq X$. By Douglas' Factorization Lemma, $X^\half= K P_{\sW_1}Y^*$ for some contraction $K$, and hence
\[
\rank X=\rank X^\half=\rank ( K P_{\sW_1}Y^*)\leq \rank(P_{\sW_1}Y^*)
=\rank(YP_{\sW_1}Y^*).
\]
Thus $\rank X\leq \dim\sW_1$. A similar argument applied to $YP_{\sW_1}Y^*\geq YP_{\sW_1}Y^*-X$ shows $\rank(YP_{\sW_1}Y^*-X)\leq \rank(YP_{\sW_1}Y^*)=\dim\sW_1$.
\end{proof}

\begin{lem}\label{lemrep}
Consider two matrix functions $V$ and $W$, analytic on $\BD$,  with valued $V(z):\BC^k\to\BC^p$ and $W(z):\BC^\nu\to\BC^p$, $z\in\BD$, and Taylor expansions $V(z) = \sum_{j=0}^\infty z^j V_j$ and $W(z) = \sum_{j=0}^\infty z^j W_j$. Assume $\sum_{j=0}^\infty V_j^*V_j<\iy$ and $\sum_{j=0}^\infty W_j^*W_j<\iy$.
If
\begin{equation}\label{MNcondition}
V(z)V(w)^* = W(z)W(w)^*
\quad \mbox{for all }  z,w \in \mathbb{D},
\end{equation}
then there exists a partial isometry $M:\BC^\nu\to\BC^k$ such that $V (z)M = W(z)$ for all $z$ in $\mathbb{D}$. Moreover, this partial isometry $M$ is given by $M =M_1^+M_*$ with
\begin{align}\label{Ucom}
M_1 &=
\frac{1}{2\pi} \int_0^{2\pi} V(e^{i \omega})^*V(e^{i \omega}) d \omega
= \sum_{j=0}^\infty V_j^* V_j \nonumber \\
M_* &=
\frac{1}{2\pi} \int_0^{2\pi} V(e^{i \omega})^*W(e^{i \omega}) d \omega
=\sum_{j=0}^\infty V_j^* W_j.
\end{align}
Here $M_1^+$ denotes the Moore-Penrose pseudo inverse of $M_1$.
\end{lem}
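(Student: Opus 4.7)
The plan is to use a lurking-isometry / Kolmogorov decomposition argument driven by the Gramian identity \eqref{MNcondition}. Expanding both sides of $V(z)V(w)^*=W(z)W(w)^*$ as a double power series in $z$ and $\bar w$ and equating the coefficients of $z^j\bar w^l$ yields the scalar-level identities
\[
V_jV_l^*=W_jW_l^*\qquad(j,l\geq 0),
\]
which are the input data for the partial isometry construction.

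Next I would construct $M$ explicitly. For any finite collection of indices $j_1,\ldots,j_n\geq 0$ and vectors $u_1,\ldots,u_n\in\BC^p$ the identities above give, by direct computation,
\[
\Big\|\sum_i V_{j_i}^*u_i\Big\|^2=\sum_{i,i'}\lg V_{j_{i'}}V_{j_i}^*u_i,u_{i'}\rg=\sum_{i,i'}\lg W_{j_{i'}}W_{j_i}^*u_i,u_{i'}\rg=\Big\|\sum_i W_{j_i}^*u_i\Big\|^2.
\]
Hence the assignment $\sum_i V_{j_i}^*u_i\mapsto\sum_i W_{j_i}^*u_i$ is well defined and isometric, and extends by linearity to an isometry between the subspaces $\sL_V:=\hull\{V_j^*u:j\geq 0,\,u\in\BC^p\}\subseteq\BC^k$ and $\sL_W:=\hull\{W_j^*u:j\geq 0,\,u\in\BC^p\}\subseteq\BC^\nu$. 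Extending by zero on $\sL_V^\perp$ produces a partial isometry $N:\BC^k\to\BC^\nu$ satisfying $NV_j^*=W_j^*$ for every $j$, and taking adjoints gives a partial isometry $M:=N^*:\BC^\nu\to\BC^k$ with $V_jM=W_j$ for all $j$. Term-by-term summation of the Taylor expansions then yields $V(z)M=W(z)$ for $z\in\BD$.

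For the explicit formula, the square-summability hypotheses ensure $V$ and $W$ belong to matrix $H^2$, so Parseval's identity gives at once
\[
M_1=\tfrac{1}{2\pi}\int_0^{2\pi}V(e^{i\omega})^*V(e^{i\omega})\,d\omega=\sum_jV_j^*V_j\ands M_*=\tfrac{1}{2\pi}\int_0^{2\pi}V(e^{i\omega})^*W(e^{i\omega})\,d\omega=\sum_jV_j^*W_j.
\]
Left-multiplying the identities $V_jM=W_j$ by $V_j^*$ and summing yields $M_1M=M_*$. To conclude $M=M_1^+M_*$ it suffices to verify $\ran M\perp\kr M_1$: the kernel $\kr M_1=\kr\big(\sum_jV_j^*V_j\big)=\bigcap_j\kr V_j$ coincides with $\sL_V^\perp$, while $\ran M=\ran N^*=(\kr N)^\perp=\sL_V$ by construction. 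Hence $\ran M=(\kr M_1)^\perp$, and applying $M_1^+$ to $M_1M=M_*$ together with the identity $M_1^+M_1=P_{(\kr M_1)^\perp}$ gives $M=M_1^+M_*$.

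The only real subtlety is this last range-orthogonality check: solutions $M'$ of the system $V_jM'=W_j$ are determined only up to maps with range in $\sL_V^\perp=\kr M_1$, so one must verify that the specific $M$ built above is the unique such solution with range in $(\kr M_1)^\perp$ in order for the Moore--Penrose formula to recover it. Everything else reduces to routine coefficient manipulations and Parseval identities.
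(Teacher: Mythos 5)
Your argument is correct and is essentially the paper's own proof with Douglas' factorization lemma unpacked: the coefficient identities $V_jV_l^*=W_jW_l^*$ are exactly the block entries of the operator identity $\Omega_1\Omega_1^*=\Omega_2\Omega_2^*$ for the column operators $\Omega_i$ that the paper extracts via point evaluations, and your hand-built isometry on $\hull\{V_j^*u\}$ is precisely the partial isometry Douglas' lemma supplies there. Your closing verification that $\ran M=(\kr M_1)^\perp$ is the same Moore--Penrose projection step the paper phrases as $M=\Omega_1^+\Omega_1M=(\Omega_1^*\Omega_1)^+\Omega_1^*\Omega_2$, so the proposal is complete and matches the intended proof.
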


\begin{proof}[\bf Proof.]
The assumption yields we can define operators $\Omega_1$ and $\Omega_2$ by
\[
\Omega_1 = \begin{bmatrix}
             V_0 \\
             V_1 \\
             V_2 \\
             \vdots \\
           \end{bmatrix}:\mathbb{C}^k \rightarrow \ell_+^2(\mathbb{C}^p)
\quad \mbox{and}\quad
\Omega_2 = \begin{bmatrix}
             W_0 \\
             W_1 \\
             W_2 \\
             \vdots \\
           \end{bmatrix}:\mathbb{C}^\nu \rightarrow \ell_+^2(\mathbb{C}^p).
\]
For each $z\in\BD$ we write $\fF_{z}$ for the \emph{point evaluation operator}
\begin{equation*}
\fF_{z}=E_p^*(I-zS_p^*)^{-1}:\ell^2_+(\BC^p)\to \BC^p, \quad \mbox{i.e.,}\quad
\fF_{z}(x_0,x_1,x_2,\ldots)=\sum_{j=0}^\iy z^jx_j.
\end{equation*}
Note that $V(z)=\mathfrak{F}_{p,z}\om_1$ and $W(z)=\mathfrak{F}_{p,z}\om_2$, $z\in\BD$. Hence
\[
\mathfrak{F}_{p,z}(\om_1\om_1^*-\om_2\om_2^*)\mathfrak{F}_{p,w}^* =V(z)V(w)^*-W(z)W(w)^*=0 \quad (z,w\in\BD).
\]
Since $\cap_{z\in\BD}\kr \mathfrak{F}_{p,z}=\{0\}$, it follows that $\om_1\om_1^*=\om_2\om_2^*$. By Douglas' factorization lemma there exists a unique partial isometry $M:\BC^\nu\to\BC^k$ that satisfies $\om_1M=\om_2$ and has $\im\om_2^*$ as initial space and $\im \om_1^*$ as final space. Multiplying both sides with $\mathfrak{F}_{p,z}$ yields $V(z)M=W(z)$, $z\in\BD$. Note that the Moore-Penrose pseudo inverse of $\om_1$ is given by $\om_1^+=(\om_1^*\om_1)^+\om_1^*$. Then $\om_1^+\om_1$ is the orthogonal projection on $\im \om_1^*$. Thus  $M=\om_1^+\om_1M=\om_1^+\om_2 =(\om_1^*\om_1)^+\om_1^*\om_2$. Note that $M_1=\om_1^*\om_1$ and $M_*=\om_1^*\om_2$. Hence $M= M_1^+ M_*$.
\end{proof}

\paragraph{\bf Acknowledgement}
The author thanks Art Frazho and Rien Kaashoek for the useful discussions and their constructive suggestions during the preparation of this paper.

\end{document}